\theoremstyle{plain}
\newtheorem{thm}{Theorem}[section]
\newtheorem{lem}[thm]{Lemma}
\newtheorem{prop}[thm]{Proposition}
\theoremstyle{definition}
\newtheorem{defn}{Definition}[section]
\theoremstyle{remark}
\newtheorem{rem}{Remark}[section]
\theoremstyle{example}
\newtheorem{ex}{Example}[section]
\numberwithin{equation}{section}
\newcommand{\Z}{\mathbb Z}
\newcommand{\F}{\mathbb F}
\newcommand{\C}{\mathbb C}
\newcommand{\PP}{{\mathbb P}}
\newcommand{\OO}{\mathcal{O}}
\newcommand{\FF}{\mathcal{F}}
\newcommand{\EE}{\mathcal{E}}
\newcommand{\Q}{\mathcal{Q}}
\DeclareMathOperator{\HH}{H} 
\DeclareMathOperator{\Ext}{Ext} \DeclareMathOperator{\Hom}{Hom}
 \DeclareMathOperator{\rk}{rk}
\DeclareMathOperator{\Ker}{Ker} 
\DeclareMathOperator{\coker}{Coker} 
 \DeclareMathOperator{\Id}{Id}
 \DeclareMathOperator{\Lie}{Lie}
 \DeclareMathOperator{\SL}{SL}
 \DeclareMathOperator{\gr}{gr}
\DeclareMathOperator{\E}{\underbar{E}}
\DeclareMathOperator{\lieg}{\mathfrak{g}}
\DeclareMathOperator{\liep}{\mathfrak{p}}
\DeclareMathOperator{\lieh}{\mathfrak{h}}
\DeclareMathOperator{\lier}{\mathfrak{r}}
\DeclareMathOperator{\lien}{\mathfrak{n}}
\title{Sections of homogeneous vector bundles}
\subjclass[2000]{14F05,14M17, 16G20}
\keywords{Rational Homogeneous Variety, Homogeneous Vector Bundle, Cohomology, Quiver Representation}
\author{Ada Boralevi}
\address{%
Department of Mathematics\\
Mailstop 3368\\
Texas A$\&$M University\\
College Station, TX 77843-3368\\
	USA}
\email{boralevi@math.tamu.edu}
\begin{document}

\maketitle

\begin{abstract}
In this work we give a method for computing sections of homogeneous vector bundles on any rational homogeneous variety $G/P$ of type $ADE$. Our main tool 
is the equivalence of categories between homogeneous vector bundles on $G/P$ and finite dimensional representations of a given quiver with relations. 
Our result generalizes the work of Ottaviani and Rubei \cite{OR}. 
\end{abstract}

%%%%%%%%%%%%%%%%%%%%%%%%%%%%%%%%%%%%%%%%%%%%%%%%%%%%%%%%%%%%%%%%%%%%%%%%%%%%%%%%%%%%%%%%%%%%%%%%%%%%%%
%%%%%%%%%%%%%%%%%%%%%%%%%%%%%%%%%%%%%%%%%%%%%%%%%%%%%%%%%%%%%%%%%%%%%%%%%%%%%%%%%%%%%%%%%%%%%%%%%%%%%%
\section{Introduction}
%%%%%%%%%%%%%%%%%%%%%%%%%%%%%%%%%%%%%%%%%%%%%%%%%%%%%%%%%%%%%%%%%%%%%%%%%%%%%%%%%%%%%%%%%%%%%%%%%%%%%%
%%%%%%%%%%%%%%%%%%%%%%%%%%%%%%%%%%%%%%%%%%%%%%%%%%%%%%%%%%%%%%%%%%%%%%%%%%%%%%%%%%%%%%%%%%%%%%%%%%%%%%
The Borel-Weil-Bott theorem computes the cohomology of irreducible homogeneous vector bundles on rational homogeneous varieties. In this paper we give a method for computing sections of homogeneous vector bundles in the non-irreducible case. Our result holds on rational homogeneous varieties of type $ADE$ and generalizes the work of Ottaviani and Rubei \cite{OR} holding on Hermitian symmetric varieties of type $ADE$. 

The category of homogeneous vector bundles on a rational homogeneous variety $G/P$ is equivalent to the category of $P$-modules and also to that of integral $\liep$-modules. 
The Borel-Weil-Bott theorem exploits this equivalence: the cohomology of an irreducible homogeneous bundle $E_\lambda$ is determined by the Weyl orbit of the maximal weight of its associated representation. Our main tool is instead the equivalence of categories relating homogeneous vector bundles on $G/P$ with finite dimensional representations of a given quiver with relation $\Q_{G/P}$.

The original idea of using quivers to study homogeneous bundles is due to Bondal and Kapranov \cite{BK} and was later refined by Hille \cite{Hi1}. In \cite{ACGP} \'{A}lvarez-C\'{o}nsul and Garc\'{i}a-Prada gave an equivalent construction, while in \cite{OR} Ottaviani and Rubei used the quiver for computing cohomology.

The cohomology of a homogeneous bundle $E$ on a Hermitian symmetric variety of type $ADE$ is obtained in \cite{OR} as cohomology of a complex $(\HH^*(\gr E), c_*)$. Here $\gr E$ is the associated graded bundle, obtained by restricting the $P$-module associated with $E$ to its Levi factor $R$. The maps $c_i$ are constructed by composing the quiver representation maps of $[E]$ along the segments connecting any vertex with its mirror image in the adjacent Bott chambers. In this paper we generalize this construction to all $ADE$ rational homogeneous varieties, but we limit ourselves to the first cohomology group $\HH^0(E)$. In the last part of the paper we give explicit examples of why Ottaviani and Rubei's result could not be fully generalized. We conjecture that in fact their result does not hold in the general non-Hermitian symmetric case.

The key point in our construction is the use of a special type of homogeneous bundles, that we call $A_m$-type bundles. Namely these are bundles whose quiver representation has support on an $A_m$-type quiver, and thus they have nice splitting properties entailed by quiver theory. In some sense, they play in our proof the same role as the $\PP^1$ fibration does in 
Demazure's proof \cite{Dem} of the Borel-Weil-Bott theorem.

We start by giving a method for computing sections of $A_m$-type bundles. The key point is that we can interpret the coboundary map in cohomology as a map in quiver representations.\\

\textbf{Theorem A.} \emph{Let $S$ be an $A_m$-type bundle on a rational homogeneous variety $G/P$ of type $ADE$, and suppose that $\HH^0(\gr S) \neq 0$. Let $E_\lambda$ be an irreducible summand of $\gr S$ such that $\HH^0(E_\lambda)=V$, for $V=\Sigma^\lambda$ nonzero irreducible $G$-module. Then:
\begin{enumerate}
	\item If among the summands of $\gr S$ there is an $E_\mu$ such that $\HH^1(E_\mu)=V$, consider in the quiver $\Q|_S$ the path from $E_\lambda$ to $E_\mu$. By composing the linear maps corresponding to this path in the representation $[S]$, we get a linear map $\sigma_0^V: V_\lambda \rightarrow V_\mu$. Then the isotypical component $\HH^0(S)^V=V \otimes (\Ker \sigma_0^V)$. 
	\item If there is no such $E_\mu$, then $\HH^0(S)^V= \HH^0(E_\lambda \otimes V_\lambda)=V^{\oplus \dim V_\lambda} $. 
\end{enumerate}}

Theorem A already shows some advantage of our method over the usual method of spectral sequences. In order to compute sections of an $A_m$-type bundle $S$ it is enough to compute the maps of the associated quiver representation once and for all.  

We then use the result of Theorem A to give a construction that allow us to extend the result to any homogeneous bundle on $G/P$. Namely, given any homogeneous bundle $E$ on $X$, every time we have a pair $E_\lambda$, $E_\mu$ of irreducible summands of $\gr E$ such that $\HH^0(E_\lambda)=\HH^1(E_\mu)$, we can construct a distinguished isomorphism $j_{\lambda\mu}:\HH^0(E_\lambda) \rightarrow \HH^1(E_\mu)$ (the details are contained in Lemma \ref{distinguished iso} ).

Our main result is the following:\\

\textbf{Theorem C.} \emph{Let $E$ be a homogeneous vector bundle on a rational homogeneous variety $G/P$ of type $ADE$, and 
consider pairs $E_\lambda$, $E_\mu$ of irreducible summands of the graded $\gr E$ such that $\HH^0(E_\lambda)=\HH^1(E_\mu)$.\\ Define the maps
$c_{\lambda\mu}:\HH^0(E_\lambda \otimes V_\lambda) \rightarrow \HH^1(E_\mu \otimes V_\mu)$
to be the tensor product of the distinguished isomorphism $j_{\lambda\mu}$ with the composition of the maps $V_\lambda \rightarrow V_\mu$ in the quiver representation.
Putting together all these maps for all the possible pairings $(\lambda,\mu)$ we get a map $c_0:=\sum_{\lambda,\mu}c_{\lambda\mu}$, $c_0:\HH^0(\gr E) \rightarrow \HH^1(\gr E)$. Then $\HH^0(E)=\Ker c_0$}\\

One of the advantages of our construction is that one only has to deal with maps $\HH^0(\gr E) \rightarrow \HH^1(\gr E)$, whereas when using spectral sequences it is often necessary to compute $\HH^2(\gr E)$ and the related maps as well.\\

The paper is organized as follows. We set our notation and recall the Borel-Weil-Bott Theorem in Section 2.1. We describe the quiver $\Q_X$ with relations, its representations and the equivalence of categories with homogeneous bundles in Sections 2.2 and 2.3. Then we move on to cohomology computations: in Section 3.2 we treat the case of $A_m$-type bundles, and in Section 3.3 we state and prove our main result giving a method to compute sections of homogeneous bundles. In the last Section 4 we describe in detail an example on the flag manifold $\SL_3/B$ illustrating why our construction cannot be generalized to higher cohomology.

%%%%%%%%%%%%%%%%%%%%%%%%%%%%%%%%%%%%%%%%%%%%%%%%%%%%%%%%%%%%%%%%%%%%%%%%%%%%%%%%%%%%%%%%%%%%%%%%%%%%%%
%%%%%%%%%%%%%%%%%%%%%%%%%%%%%%%%%%%%%%%%%%%%%%%%%%%%%%%%%%%%%%%%%%%%%%%%%%%%%%%%%%%%%%%%%%%%%%%%%%%%%%
\section{Preliminaries}
%%%%%%%%%%%%%%%%%%%%%%%%%%%%%%%%%%%%%%%%%%%%%%%%%%%%%%%%%%%%%%%%%%%%%%%%%%%%%%%%%%%%%%%%%%%%%%%%%%%%%%
%%%%%%%%%%%%%%%%%%%%%%%%%%%%%%%%%%%%%%%%%%%%%%%%%%%%%%%%%%%%%%%%%%%%%%%%%%%%%%%%%%%%%%%%%%%%%%%%%%%%%%
%%%%%%%%%%%%%%%%%%%%%%%%%%%%%%%%%%%%%%%%%%%%%%%%%%%%%%%%%%%%%%%%%%%%%%%%%%%%%%%%%%%%%%%%%%%%%%%%%%%%%%
\subsection{First equivalence of categories and Bott theorem}\label{first}
%%%%%%%%%%%%%%%%%%%%%%%%%%%%%%%%%%%%%%%%%%%%%%%%%%%%%%%%%%%%%%%%%%%%%%%%%%%%%%%%%%%%%%%%%%%%%%%%%%%%%%
Let $G$ be a complex semisimple Lie group of $ADE$ type. 
We make a choice of simple roots $\Delta=\{\alpha_1,\ldots,\alpha_n\}$ of $\lieg=\Lie G$, and we call $\Phi^+$ (respectively $\Phi^-$) the set of
positive (respectively negative) roots. We denote by $\lieh \subset
\lieg$ the Cartan subalgebra. 
The Killing product $(\:\:\:,\:)$ allows us to identify $\lieh$ with $\lieh^*$, and thus define the 
Killing product on $\lieh^*$ as well. Let $\{\lambda_1,\ldots,\lambda_n\}$ be the fundamental weights corresponding to $\{\alpha_1,\ldots,\alpha_n\}$, that is, the elements of $\lieh^*$ such that 
$2(\lambda_i,\alpha_j)/(\alpha_j,\alpha_j)=\delta_{ij}$. Let $Z$ be the lattice generated on $\Z_{\geq 0}$ by the fundamental weights. The elements in $Z$ are called the dominant weights of $G$, and they are maximal weights 
of the irreducible representations of $\lieg$. In the $ADE$ case, all roots have length $\sqrt{2}$.

For any $V$ representation of $G$, we denote by $V^G$ its invariant part, that is, the subspace of $V$ where $G$ acts trivially. If $\Sigma$ 
is an irreducible representation, we denote $V^\Sigma:=\Hom(\Sigma,V)^G \otimes \Sigma$.

Finally, let $X=G/P$ be a rational homogeneous variety, where $P \leq G$ is a parabolic subgroup. Our aim is studying the cohomology of homogeneous vector bundles on $X$.

A vector bundle $E$ on $G/P$ is called homogeneous if there is an action of $G$ on $E$ such that the following diagram commutes:
$$\xymatrix{G \times E \ar[r]\ar[d]&E\ar[d]\\
G \times G/P \ar[r]& G/P}$$
where the bottom row is just the natural action of $G$ on the cosets $G/P$.

The category of homogeneous vector bundles on $G/P$ is equivalent to the category $P-mod$ of representations of $P$ via $E=G \times^P \EE$, 
where $(gp,e) \simeq (g, pe)$ which surjects onto $G/P$ with fiber isomorphic to $\EE$. 
Both categories are also equivalent to the category of integral $\liep$-modules, where $\liep=\Lie P$ (see for example \cite{BK}). We indicate both the $P$-module and the $\liep$-module with the same letter. 

For any weight $\lambda$ we denote by $E_\lambda$ the homogeneous bundle corresponding to $\EE^\lambda$, the dual of the irreducible 
representation of $P$ with highest weight $\lambda$. Here $\lambda$ belongs to the fundamental Weyl chamber of the reductive part of $P$. 
Indeed, $P$ decomposes as $P=R\cdot N$ into a reductive part $R$ and a unipotent part $N$. 
At the level of Lie algebras this decomposition entails a splitting $\liep=\lier \oplus \lien$, with the obvious notation $\lier=\Lie R$ and $\lien=\Lie N$. 
By a result of Ise \cite{Ise} a representation of $\liep$ is completely reducible if and only if
it is trivial on $\lien$, hence it is completely determined by its restriction to $\lier$.

The cohomology of irreducible bundles can be computed using the Borel-Weil-Bott theorem. We recall here the result as stated by Kostant in \cite{Ko}.

Let $\Lambda$ be the fundamental Weyl chamber of $\lieg$, and let $\Lambda^+$ be the Weyl chamber of $\lier$, the reductive part
of $\lier$ in the Levi decomposition. Now consider the Weyl group $W$, and take those elements sending $\Lambda$ in $\Lambda^+$:
$$W^1=\{w \in W| w\Lambda \subset \Lambda^+\}.$$
Recall that $W$ is generated by elements of type $r_\alpha$ (where
with $r_\alpha$ we denote the reflection with respect to the
hyperplane $H_\alpha$, orthogonal to the root $\alpha$) and define
the length of an element $w \in W$, $l(w)$, as the minimum number of reflections needed to obtain $w$.
Finally, recall the affine action of the Weyl group on the weights. For $w \in W$:
$$w \cdot \lambda := w(\lambda+g)-g,$$
where $g=\sum_{i=1}^n \lambda_i$ is the sum of all fundamental weights. 
 
\begin{thm}\label{teoremaBott} Let $E_\lambda$ be an irreducible homogeneous vector bundle on $G/P$, where $\lambda \in \Lambda^+$. Then there
exists a unique element of the Weyl group $w \in W$ such that $w(\lambda+g) \in \Lambda$  (note that $w^{-1} \in W^1$).
\begin{enumerate}
\item If $w(\lambda + g)$ belongs to the interior of $\Lambda$, then setting $\nu=w\cdot \lambda$ 
we have $\HH^{l(w)}(E_\lambda)=\Sigma^\nu$, the dual of the irreducible $\lieg$-module with highest weight $\nu$, and $\HH^j(E_\lambda)=0$ for $j \neq l(w)$.
\item If $w(\lambda + g)$ belongs to the boundary of $\Lambda$, then $\HH^j(E_\lambda)=0$ for all $j$.
\end{enumerate}
\end{thm}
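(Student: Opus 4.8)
The statement is the Borel--Weil--Bott theorem, and the plan is to reproduce the geometric argument of Demazure \cite{Dem}: first settle the full flag variety $G/B$ and a line bundle, then reduce the general parabolic case to it. For the base case (Borel--Weil) take $\lambda$ dominant and write $L_\lambda$ for the associated line bundle on $G/B$. Since the anticanonical bundle of $G/B$ is $L_{2g}$ and $\lambda+2g$ is regular dominant, $L_{\lambda+2g}$ is ample, so Kodaira--Nakano vanishing gives $\HH^j(G/B,L_\lambda)=\HH^j(G/B,L_{\lambda+2g}\otimes\omega_{G/B})=0$ for $j>0$, while $\HH^0(G/B,L_\lambda)=\Sigma^\lambda$ is the classical Borel--Weil identification of the irreducible module (proved directly, e.g.\ by realizing global sections as matrix coefficients). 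Applied with the reductive group $R$ in place of $G$, this also computes the cohomology of irreducible bundles on the flag varieties of $R$, which we use below.

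The core of the proof is the $\PP^1$-fibration step. For each simple root $\alpha_i$ let $\pi_i\colon G/B\to G/P_i$ be the projection to the partial flag variety, a $\PP^1$-bundle with fibre $P_i/B$. On a fibre $L_\lambda$ restricts to $\mathcal O(m)$ with $m=(\lambda,\alpha_i)$, so cohomology and base change produce three regimes: if $m\geq 0$ then $R^1\pi_{i*}L_\lambda=0$ and $R^0\pi_{i*}L_\lambda$ is locally free; if $m=-1$ then $R^j\pi_{i*}L_\lambda=0$ for all $j$; if $m\leq -2$ then $R^0\pi_{i*}L_\lambda=0$ and $R^1\pi_{i*}L_\lambda$ is locally free. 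The key input, obtained by relative Serre duality along $\pi_i$ (whose relative dualizing sheaf is $L_{-\alpha_i}$, that is $\mathcal O(-2)$ on fibres), is that for $m\geq 0$ the homogeneous bundle $R^0\pi_{i*}L_\lambda$ on $G/P_i$ is canonically isomorphic to $R^1\pi_{i*}L_{r_i\cdot\lambda}$, where $r_i$ is the reflection in $\alpha_i$ and $r_i\cdot\lambda=r_i(\lambda+g)-g$ lies in the regime $(r_i\cdot\lambda,\alpha_i)=-m-2\leq -2$. Since in every regime $R^\bullet\pi_{i*}$ is concentrated in a single degree, the Leray spectral sequence degenerates and yields the dichotomy: (a) if $(\lambda+g,\alpha_i)=0$ then $\HH^\bullet(G/B,L_\lambda)=0$; (b) if $(\lambda+g,\alpha_i)>0$ then $\HH^k(G/B,L_\lambda)\cong\HH^{k+1}(G/B,L_{r_i\cdot\lambda})$ for all $k$.

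I would then conclude by induction on length. Given an arbitrary weight $\lambda$, move $\lambda+g$ toward the dominant chamber by simple affine reflections. If at some stage the current weight $\mu$ satisfies $(\mu+g,\alpha_i)=0$, then (a) kills $\HH^\bullet(L_\mu)$ and, running (b) backward along the reflections already performed (each an isomorphism, and zero stays zero), $\HH^\bullet(G/B,L_\lambda)=0$: this is case (2), when $\lambda+g$ lies on a wall. Otherwise $\lambda+g$ is regular, there is a unique $w\in W$ with $w(\lambda+g)$ interior to $\Lambda$, $\nu:=w\cdot\lambda$ is dominant, and $\HH^0(L_\nu)=\Sigma^\nu$; choosing a reduced decomposition $w^{-1}=r_{i_1}\cdots r_{i_l}$ with $l=l(w)$, a standard property of reduced expressions guarantees that at each step the current weight lies in the open half-space where (b) applies, so (b) applies $l$ times and $\HH^{l(w)}(G/B,L_\lambda)\cong\HH^0(G/B,L_\nu)=\Sigma^\nu$ with all other cohomology vanishing: this is case (1), with the cohomological degree counted by $l(w)$. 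For a general parabolic $P\supseteq B$ and $\lambda\in\Lambda^+$, let $q\colon G/B\to G/P$; as $\lambda$ is $R$-dominant, Borel--Weil for $R$ along the fibres gives $R^{>0}q_*L_\lambda=0$ and, with the duality conventions in force, $q_*L_\lambda\cong E_\lambda$, hence $\HH^\bullet(G/P,E_\lambda)\cong\HH^\bullet(G/B,L_\lambda)$ and the previous discussion applies with the same $w$ and $\nu$.

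The step I expect to be the main obstacle is the canonical identification $R^0\pi_{i*}L_\lambda\cong R^1\pi_{i*}L_{r_i\cdot\lambda}$ in the fibration argument: one must compute the relative dualizing sheaf $\omega_{\pi_i}=L_{-\alpha_i}$ precisely, follow the ensuing weight twist through relative Serre duality, and verify that the resulting isomorphism is one of $G$-equivariant sheaves on $G/P_i$, not merely a fibrewise one. The remaining difficulties are comparatively routine: the reduced-word bookkeeping in the induction (checking that the positivity hypothesis of (b) persists at each step, which is exactly where $l(w)$ enters as the cohomological degree), and --- if one wants a fully self-contained proof --- the base-case Borel--Weil identification $\HH^0(G/B,L_\lambda)=\Sigma^\lambda$ itself.
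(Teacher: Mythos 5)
This is Theorem \ref{teoremaBott}, the Borel--Weil--Bott theorem, which the paper does not prove at all: it is quoted verbatim from Kostant \cite{Ko} as classical background, so there is no in-paper argument to compare against. Your proposal is a sound outline of Demazure's proof \cite{Dem} (which the introduction of the paper itself alludes to as the model for the role of $A_m$-type bundles). The three stages are all correct: the Borel--Weil base case via Kodaira vanishing for $L_{\lambda+2g}$ together with the matrix-coefficient identification of $\HH^0$; the $\PP^1$-fibration step $\pi_i\colon G/B\to G/P_i$ with the trichotomy $m\geq 0$, $m=-1$, $m\leq -2$ for $m=(\lambda,\alpha_i)$ (legitimate in the $ADE$ setting of the paper, where $(\alpha_i,\alpha_i)=2$ so this equals the coroot pairing) and the shift $\HH^k(L_\lambda)\cong\HH^{k+1}(L_{r_i\cdot\lambda})$; and the induction along a reduced word, which is where $l(w)$ correctly emerges as the cohomological degree, plus the Leray reduction from $G/P$ to $G/B$ using fibrewise Borel--Weil for the Levi factor $R$. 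You have also correctly isolated the one genuinely delicate point, namely that the identification $\pi_{i*}L_\lambda\cong R^1\pi_{i*}L_{r_i\cdot\lambda}$ must be established $G$-equivariantly (via relative Serre duality one gets $(\pi_{i*}L_{-r_i\cdot\lambda-\alpha_i})^\vee$, and one must check that this rank-$(m+1)$ bundle is equivariantly the same as $\pi_{i*}L_\lambda$, which works because the relevant $\Sym^m$ of a rank-$2$ module is self-dual up to a determinant twist); Demazure's own argument sidesteps Serre duality with two short exact sequences, but your route is standard and correct. The only caveat worth recording is notational: in this paper $E_\lambda$ is built from the \emph{dual} $\EE^\lambda$ of the irreducible $P$-module of highest weight $\lambda$, and $\Sigma^\nu$ denotes the dual of the irreducible $\lieg$-module of highest weight $\nu$, so the Borel--Weil identification in your base case must be stated with these duals in place to match the theorem as written.
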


Let now $\xi_1,\ldots,\xi_m$, $m=\dim X$, be the weights of the
adjoint representation (which corresponds to the cotangent bundle
$\Omega^1_X$). Let $Y_{\xi_j}=H_{\xi_j}-g$, and denote by
$s_j$ the reflection through $Y_{\xi_j}$, for $j=1,\ldots,m$. For any weight $\lambda$ we have that
$$s_j(\lambda)=r_{\xi_j}(\lambda+g)-g,$$
hence if $w=r_{\xi_1}\cdot\ldots\cdot r_{\xi_p}$ then
$w(\lambda+g)-g=s_1\cdot\ldots\cdot s_p(\lambda)$.

\begin{defn}[Regular weights and Bott chambers]
A dominant weight $\nu$ is called regular if $(\nu,\alpha) \neq 0$ for every root $\alpha$, where $(\:\:,\:)$ is the Killing form.
Otherwise $\nu$ is called singular and it belongs to some hyperplane $H_\alpha$. Set:
$$\Lambda^+_0:=\{\lambda \in \Lambda^+ \:|\: \lambda+g \:\hbox{is regular}\}.$$
$\Lambda^+_0$ is divided into several ``chambers'', that we call \emph{Bott chambers}.
\end{defn}

Notice that $\Lambda^+_0$ is obtained from $\Lambda^+$ by removing
exactly the $Y_{\xi_j}$. Hence a convenient composition of the
$s_j$'s  brings $\Lambda$ exactly into the above defined Bott
chambers.\\ The length of the Weyl elements needed to take weights belonging to the same Bott chamber into the dominant chamber is constant, 
and we call it the \emph{length of the Bott chamber}. Moreover, if two Bott chambers have a common hyperplane in their boundary, then their lengths are consecutive integers.

%%%%%%%%%%%%%%%%%%%%%%%%%%%%%%%%%%%%%%%%%%%%%%%%%%%%%%%%%%%%%%%%%%%%%%%%%%%%%%%%%%%%%%%%%%%%%%%%%%%%%%
\subsection{Definition of the quiver $\Q_X$ and its representations}\label{sezione def of the quiver}
%%%%%%%%%%%%%%%%%%%%%%%%%%%%%%%%%%%%%%%%%%%%%%%%%%%%%%%%%%%%%%%%%%%%%%%%%%%%%%%%%%%%%%%%%%%%%%%%%%%%%%
To any rational homogeneous variety $X=G/P$ we associate a quiver with relations, that we denote by $\Q_X$. 
The idea is to exploit all the information given by the choice of the parabolic subgroup $P$, with its decomposition $P=R\cdot N$. 
For basics on quiver theory we refer the reader to \cite{DeWe}.

\begin{defn}[The quiver $\Q_X$]\label{defQX}
Given $X=G/P$, the quiver $\Q_X$ is constructed as follows. The vertices are the irreducible homogeneous bundles 
$E_\lambda$ on $X$, which we identify with elements $\lambda \in \Lambda^+$.\\
There is an arrow connecting the vertices $E_\lambda$ and $E_\mu$ if and only if the vector space $\Hom(\Omega^1_X \otimes E_\lambda,
E_\mu)^G$ is non-zero (see next Lemma \ref{key lemma}).\\
The ideal of relations on $\Q_X$ will be defined in Section \ref{sezione equivalenza categorie}.
\end{defn}

\begin{rem}\label{quante frecce}
Definition \ref{defQX} is precisely the original one of Bondal and
Kapranov \cite{BK}, later used also by Alvarez-C\'{o}nsul and
Garc\'{\i}a-Prada \cite{ACGP}. Arrows (modulo translation) correspond to weights of
the nilpotent algebra $\lien$, considered as an $\lier$-module with the adjoint action.

In fact one could obtain an equivalent theory by considering the
same vertices with a smaller number of arrows, i.e. by taking
only weights of the quotient $\lien/[\lien,\lien]$. This is for example the choice made by Hille \cite{Hi1}.
If that's the case, then the other arrows turn out to be a consequence of the relations of the quiver. This will be clarified in Section \ref{sezione equivalenza categorie}, where we define the relations with some details.
We call the first ones \emph{generating arrows}, and the other ones \emph{derived arrows}. Also, 
in naming the weights of $\lien$ by $\{\xi_1,\ldots,\xi_n\}$ as above, we do it so that the first $\ell$ ones correspond to the 
generating elements belonging to $\lien/[\lien,\lien]$ (see Lemma \ref{Prop 6.4}).\\
In the Hermitian symmetric case the two definitions agree, and they coincide with the definition of the arrows given in \cite{OR}.
\end{rem}

Let now $E$ be a homogeneous vector bundle over $X$: we want to associate to it a representation of the quiver $\Q_X$, that we call $[E]$.

The bundle $E$ comes with a filtration:
\begin{equation}\label{filtrazione}
0 \subset E_1 \subset E_2 \subset \ldots \subset E_k=E,
\end{equation}
where each $E_i/E_{i-1}$ is completely reducible. We define $\gr \EE=\oplus_i \EE_i/\EE_{i-1}$ for any filtration
(\ref{filtrazione}). The graded $\gr \EE$ (and its associated completely reducible graded vector bundle $\gr E$) do not depend on the filtration and $\gr \EE$ is given by looking at our $\liep$-module $\EE$ as a module over
$\lier$, so that it decomposes as a direct sum of irreducibles. We thus have the decomposition:
\begin{equation}\label{grE}
\gr E = \bigoplus_\lambda E_\lambda \otimes V_\lambda,
\end{equation}
with multiplicity spaces $V_\lambda \simeq \C^k$. 

The functor $\EE \mapsto \gr \EE$ from $P-mod$ to $R-mod$ (which in literature is often denoted by $Ind_R^P$) is exact.

Roughly speaking, since the information on the vertices of the quiver is encoded in $R$ and the one on the arrows in $N$, we need to identify these two 
``components'' in the bundle $E$ in order to construct $[E]$. 
There is an equivalence of categories between $\liep$-modules $\EE$ and pairs $(\FF,\theta)$, 
where $\FF$ is an $\lier$-module and $\theta: \lien \otimes \FF \rightarrow \FF$ is an equivariant morphism satisfying a certain condition. 
This equivalence is entailed by the following result:

\begin{thm}\cite[Theorem 3.1]{Bora}\label{theta} Consider $\lien$ as an $\lier$-module with the adjoint action.
\begin{enumerate}
 \item Given a $\liep$-module $\EE$, the action of $\lien$ over $\EE$ induces a morphism of $\lier$-modules $\theta: \lien \otimes \gr \EE \rightarrow \gr \EE$.
The morphism
$$\theta \wedge \theta:\wedge^2\lien\otimes \gr \EE \rightarrow \gr \EE $$
defined by $\theta \wedge \theta ((n_1\wedge n_2)\otimes f):= n_1 \cdot (n_2 \cdot f) -n_2 \cdot (n_1 \cdot f)$
satisfies the equality $\theta \wedge \theta=\theta \varphi$ in $\Hom(\wedge^2\lien\otimes \gr \EE, \gr \EE)$,
where $\varphi$ is:
  \begin{align}
  \nonumber \varphi:\wedge^2 \lien \otimes \gr \EE &\rightarrow \lien \otimes \gr \EE\\
  \nonumber (n_1 \wedge n_2)\otimes e &\mapsto [n_1,n_2] \otimes e.
  \end{align}
  \item Conversely, given an $\lier$-module $\FF$ and a morphism of $\lier$-modules $\theta: \lien \otimes \FF \rightarrow \FF$
  such that $\theta \wedge \theta=\theta \varphi$, $\theta$ extends uniquely to an action of $\liep$ over $\FF$,
  giving a $\liep$-module $\EE$ such that $\gr \EE=\FF$ (and thus a vector bundle $E$ on $X$ such that $\gr E=F$).
\end{enumerate}
\end{thm}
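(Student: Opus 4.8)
The plan is to prove the two halves of the equivalence essentially by "transporting" the universal enveloping algebra structure through the $\lien$-adic filtration of $\EE$. For part (1), I would start by making precise the filtration implicit in the statement: since $\EE$ is an integral $\liep$-module and $\lien$ is nilpotent, the descending filtration $\EE \supseteq \lien\cdot\EE \supseteq \lien^2\cdot\EE \supseteq \cdots$ is finite and $\lier$-stable, and $\gr\EE$ is its associated graded, which agrees with the $\gr$ of \eqref{grE} because both compute the restriction of $\EE$ to $\lier$. The action map $\lien\otimes\EE\to\EE$ lowers the filtration degree by one, hence descends to an $\lier$-equivariant map $\theta\colon\lien\otimes\gr\EE\to\gr\EE$; equivariance is immediate from the fact that $\lien\cdot\EE$ is an $\lier$-submodule and the adjoint action of $\lier$ on $\lien$ is compatible with the bracket. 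The identity $\theta\wedge\theta=\theta\varphi$ is then nothing but the Jacobi/associativity relation $n_1\cdot(n_2\cdot f)-n_2\cdot(n_1\cdot f)=[n_1,n_2]\cdot f$ in $\EE$, read off modulo the appropriate filtration step: both sides of the claimed equality, when composed with the projection to the relevant graded piece, compute the class of $[n_1,n_2]\cdot f$, so one only has to check that the two ways of grouping the brackets land in the same filtration degree, which they do since each application of $\theta$ drops degree by exactly one.

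For part (2) — the converse — I would reconstruct the $\liep$-action on $\FF$ and verify it is well defined. Since $\liep=\lier\oplus\lien$ as $\lier$-modules, an action of $\liep$ on $\FF$ is the same as an action of $\lier$ (already given) together with an action of $\lien$ by operators $X\mapsto\theta(X\otimes-)$, subject to (i) $\lier$-equivariance of $\theta$, which is hypothesized, and (ii) the bracket relation $[\theta(X),\theta(Y)]=\theta([X,Y])$ on $\FF$ for $X,Y\in\lien$. But relation (ii) is literally the hypothesis $\theta\wedge\theta=\theta\varphi$ unwound: $\theta\wedge\theta((X\wedge Y)\otimes f)=\theta(X)\theta(Y)f-\theta(Y)\theta(X)f$ and $\theta\varphi((X\wedge Y)\otimes f)=\theta([X,Y])f$. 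Because $\lien$ is nilpotent, these two pieces of data automatically assemble into a genuine $\liep$-module structure (there is no further Jacobi obstruction: the mixed Jacobi identity among two elements of $\lien$ and one of $\lier$ is exactly $\lier$-equivariance of $\theta$, and the all-$\lier$ identity holds by assumption). One then checks integrality — the $\lier$-weights of $\FF$ already lie in the weight lattice, and exponentiating the nilpotent operators $\theta(X)$ is harmless — so $\FF$ underlies a $P$-module, hence a homogeneous bundle $E$ with $\gr E=F$ by construction, and uniqueness of the extension follows because any $\liep$-action restricting to the given $\lier$-action and inducing $\theta$ must agree with the one just built on all PBW monomials.

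Finally I would note the two constructions are mutually inverse: starting from $\EE$, passing to $(\gr\EE,\theta)$, and reconstructing gives back a $\liep$-module which is filtered with the same graded and the same $\lien$-action on the graded, hence is isomorphic to $\EE$ (the filtration is canonical, being the $\lien$-adic one); conversely reconstructing from $(\FF,\theta)$ and then taking $\gr$ and the induced $\theta$ returns the original pair by design. The main obstacle, and the point deserving the most care, is part~(2): one must be certain that the single quadratic relation $\theta\wedge\theta=\theta\varphi$ really is \emph{sufficient} — i.e.\ that no higher-order compatibility is needed for $\theta$ to extend to an associative $U(\liep)$-action — and this is exactly where nilpotence of $\lien$ and the PBW theorem do the work, since they guarantee that the relations of $U(\liep)$ are generated in degrees $\le 2$ over the relations of $U(\lier)$. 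I would cite \cite{Bora} for the full verification of this point.
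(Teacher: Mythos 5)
The paper does not actually prove this statement; it is imported verbatim from \cite[Theorem 3.1]{Bora}, so your proposal can only be judged on its own merits. Part (2) is essentially right: since $\liep=\lier\ltimes\lien$, a $\liep$-action is an $\lier$-action plus an $\lien$-action with the mixed Jacobi identity, and the three cases of the bracket check are exactly the given $\lier$-structure, the $\lier$-equivariance of $\theta$, and the hypothesis $\theta\wedge\theta=\theta\varphi$; you do not even need nilpotence or PBW for this, because the defining relations of $U(\liep)$ are quadratic for any Lie algebra.

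Part (1), however, has a genuine gap in the definition of $\theta$. If you define $\theta$ as the map \emph{induced on the associated graded} of the $\lien$-adic filtration $F^i=\lien^i\EE$, then $\theta$ sends $\lien\otimes(F^i/F^{i+1})$ into $F^{i+1}/F^{i+2}$, so $\theta\wedge\theta$ lands in $F^{i+2}/F^{i+3}$ while $\theta\varphi$ lands in $F^{i+1}/F^{i+2}$: the two sides of the claimed identity take values in \emph{different} graded pieces, contrary to your assertion that they ``land in the same filtration degree.'' Worse, since $[n_1,n_2]\cdot f=n_1(n_2f)-n_2(n_1f)\in F^{i+2}$, the induced graded map vanishes on $[\lien,\lien]\otimes\gr\EE$, so your $\theta\varphi$ is identically zero while $\theta\wedge\theta$ is not; concretely, your $\theta$ retains only the components along generating arrows ($\lien/[\lien,\lien]$) and kills precisely the derived-arrow components $g_{\lambda\nu}$ that the relations (\ref{relazione da espandere}) and the examples in Section \ref{esempi} require to be nonzero. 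The correct construction is different: use complete reducibility of integral $\lier$-modules to choose an $\lier$-equivariant isomorphism $\EE\cong\gr\EE$ (this is what ``$\gr\EE$ is given by looking at $\EE$ as a module over $\lier$'' means in Section \ref{sezione def of the quiver}), and let $\theta$ be the \emph{full} $\lien$-action transported through this splitting; then $\theta\wedge\theta=\theta\varphi$ is literally the $\lien$-module axiom, with no filtration bookkeeping at all, and the non-canonicity of the splitting is absorbed by the remark after Definition \ref{def rappresentazione} that a different choice of constants yields an equivalent representation.
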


The decomposition (\ref{grE}) entails the other decomposition:
\begin{equation}\label{decomposizione theta}
\theta \in \Hom(\lien \otimes \gr \EE, \gr \EE)= \bigoplus_{\lambda, \mu
\in \Q_0} \Hom(V_\lambda,V_\mu) \otimes
 \Hom(\lien \otimes \EE^\lambda, \EE^\mu).
\end{equation}

Before we can give the construction of the representation $[E]$ of the quiver $\Q_X$, we need the following multiplicity result:

\begin{lem}\label{key lemma}\cite[Proposition 2]{BK}
When $G$ is of type $ADE$ the dimension $\dim \Hom(\lien \otimes \EE^\lambda, \EE^\mu)^P=\dim \Hom(\Omega^1_X \otimes E_\lambda, E_\mu)^G$ is
either 0 or 1 for every pair $\lambda, \mu \in \Lambda^+$.
\end{lem}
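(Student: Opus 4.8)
\emph{Proof strategy.} The two displayed dimensions agree for formal reasons: the equivalence of categories recalled in Section~\ref{first} turns $G$-equivariant maps of homogeneous bundles into maps of $P$-modules, and the Killing form identifies $\Omega^1_X$ with the homogeneous bundle attached to the $P$-module $\lien$ (one has $\liep^{\perp}=\lien$), so that $\Hom(\Omega^1_X\otimes E_\lambda,E_\mu)^G=\Hom(\lien\otimes\EE^\lambda,\EE^\mu)^P=\Hom_P(\lien\otimes\EE^\lambda,\EE^\mu)$. The content of the statement is the bound by $1$, and the plan is to turn it into a multiplicity-freeness statement about $\lier$-modules. First I would observe that, since $\EE^\lambda$ and $\EE^\mu$ are trivial over $\lien$ while $\lien$ is an ideal of $\liep$, any $\liep$-equivariant $\phi\colon\lien\otimes\EE^\lambda\to\EE^\mu$ satisfies $\phi([n_1,n_2]\otimes e)=n_1\cdot\phi(n_2\otimes e)=0$; hence $\phi$ kills $[\lien,\lien]\otimes\EE^\lambda$ and factors through an $\lier$-map on $(\lien/[\lien,\lien])\otimes\EE^\lambda$, and conversely every such $\lier$-map lifts uniquely back. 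Thus $\Hom_P(\lien\otimes\EE^\lambda,\EE^\mu)\cong\Hom_R\bigl((\lien/[\lien,\lien])\otimes\EE^\lambda,\EE^\mu\bigr)$.

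Next I would exploit the structure of $\lien/[\lien,\lien]$. Writing $I$ for the set of simple roots not contained in $\lier$ and grading $\lieg$ by the tuple of coefficients along $\{\alpha_i\}_{i\in I}$, one has $\lien/[\lien,\lien]=\bigoplus_{i\in I}N_i$, where $N_i$ is the irreducible $\lier$-module with highest weight $\beta_i$, the highest root of $\lieg$ whose coefficient along $\alpha_i$ equals $1$ and whose coefficients along the other simple roots in $I$ vanish (these $N_i$ govern the generating arrows of $\Q_X$, cf.\ Remark~\ref{quante frecce}). The centre of $\lier$ acts on $N_i\otimes\EE^\lambda$ by a single character, built from $\beta_i|_{\mathfrak{z}}$ and the central character of $\EE^\lambda$; since $\beta_i-\beta_j$ has nonzero $I$-degree for $i\neq j$, these characters are pairwise distinct, and comparing with the central character of $\EE^\mu$ singles out at most one index $i$ for which $\Hom_R(N_i\otimes\EE^\lambda,\EE^\mu)$ can be nonzero. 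We are thereby reduced to proving $\dim\Hom_R(N_i\otimes\EE^\lambda,\EE^\mu)\leq1$ for that single $i$, i.e.\ that $\EE^\mu$ occurs in $N_i\otimes\EE^\lambda$ with multiplicity at most one.

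This is where the hypothesis that $G$ is of type $ADE$ must be used. Since $\beta_i$ is a root of the simply-laced algebra $\lieg$ and $\beta_i\neq\alpha_j$ for every simple root $\alpha_j$ of $\lier$ (the support of $\beta_i$ meets $I$), one gets $\langle\beta_i,\alpha_j^{\ch}\rangle\in\{0,1\}$ for all such $\alpha_j$, and, together with the shape of the grading, this forces $N_i$ to restrict to a minuscule---or trivial---representation of each simple factor of $[\lier,\lier]$. Now tensoring an irreducible module by a minuscule one is multiplicity-free, and a tensor product of multiplicity-free modules over a product of groups is again multiplicity-free; hence $N_i\otimes\EE^\lambda$ is a multiplicity-free $\lier$-module, so $\EE^\mu$ appears in it at most once. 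Assembling the three reductions, $\dim\Hom(\Omega^1_X\otimes E_\lambda,E_\mu)^G\in\{0,1\}$.

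The main obstacle is the minusculeness assertion just used: upgrading ``every $\lier$-Dynkin label of $\beta_i$ is at most $1$'' to ``$N_i$ is minuscule on each simple factor of $\lier$'' requires knowing that $\beta_i$ carries at most one nonzero label on each connected component of the Dynkin diagram of $\lier$, i.e.\ controlling the $\lier$-support of the highest root of each admissible $I$-degree. This is precisely the point at which simple-lacedness enters essentially---in non-simply-laced types the analogous graded piece $\lieg_{e_i}$ can fail to be minuscule and the multiplicity genuinely jumps, which is the reason for restricting to type $ADE$---and I would settle it either by appealing to the classification of parabolic $\Z$-gradings of $ADE$ Lie algebras or by a direct combinatorial estimate on root supports.
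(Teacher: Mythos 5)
The paper offers no proof of this lemma at all---it is quoted verbatim from \cite[Proposition 2]{BK}---so your argument can only be judged on its own terms, and there I see one structural problem plus the gap you flag yourself. The structural problem is your second step, the reduction to $\lien/[\lien,\lien]$. The identity $\phi([n_1,n_2]\otimes e)=n_1\cdot\phi(n_2\otimes e)-\phi(n_2\otimes n_1\cdot e)=0$ is correct, but it uses equivariance under $\lien$, i.e.\ it is only available if the superscript $P$ is read literally, with $\lien$ carrying its full adjoint $\liep$-module structure. That literal reading is incompatible with how the lemma is used in the paper: the decomposition (2.4) of $\theta$, a morphism of $\lier$-modules, forces the factors $\Hom(\lien\otimes\EE^\lambda,\EE^\mu)$ to be spaces of $\lier$-equivariant maps, and Remark \ref{quante frecce} together with Definition \ref{def rappresentazione} needs these spaces to be one-dimensional (not zero) for the \emph{derived} arrows, i.e.\ when $\mu-\lambda$ is a weight of $[\lien,\lien]$ --- see the arrow $e_{12}$ in the $\SL_3/B$ quiver. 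Your reduction proves precisely that the $P$-equivariant Hom space vanishes on every derived arrow (for $\SL_3/B$ and $\mu-\lambda=-\alpha_1-\alpha_2$, the relevant weight space of $\lien$ lies inside $[\lien,\lien]$, so your $\phi$ kills it). So either you have established a statement that deletes all derived arrows from $\Q_X$, or, under the $\lier$-equivariant reading the paper actually relies on, the factorization through $\lien/[\lien,\lien]$ is unavailable and your steps only treat the components of $\lien$ of multidegree $e_i$, saying nothing about the higher graded pieces. Either way the lemma, in the form the paper uses it, is not proved.

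Both this and your deferred ``main obstacle'' can be repaired by weakening, not strengthening, the key input. Keep the central-character argument, but apply it to \emph{all} multidegree components $\lieg_{-d}$ of $\lien$; then, instead of minusculeness, use the fact that every weight of $\lieg_{-d}$ is a root of $\lieg$ and hence has multiplicity one, together with the standard estimate that the multiplicity of $\EE^\mu$ in $W\otimes\EE^\lambda$ is at most the dimension of the $(\mu-\lambda)$-weight space of $W$ (project a highest weight vector onto $W\otimes v^\lambda$; injectivity of this projection uses only the irreducibility of $\EE^\lambda$ over $\lier$). This gives the bound $1$ for each component at once, covers the derived arrows, and makes your minusculeness claim---which is true in type $ADE$ and closable by the tree/valence argument on Dynkin diagrams, but strictly stronger than needed---unnecessary. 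The place where the $ADE$ hypothesis genuinely enters \cite{BK} and this paper is the finer structure of the graded pieces (their irreducibility and the equal root lengths exploited in Lemma \ref{Prop 6.4}), not the multiplicity bound itself, so you should check the statement actually proved in \cite{BK} before locating the hypothesis in a minusculeness assertion. A minor slip: the highest $\lier$-weight of the degree-$(-e_i)$ component is $-\alpha_i$; the root you call $\beta_i$ is (minus) its lowest weight, though by duality your Dynkin-label computation survives.
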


\begin{defn}[The representation ${[E]}$]\label{def rappresentazione}
For any $\lier$-dominant weight $\lambda$ fix a maximal vector $v^\lambda$ of $\EE^\lambda$ (it is unique up to scale). 
For any weight $\alpha$ of $\lien$, fix an eigenvector $e_\alpha \in \lien$.
Now suppose that there is an arrow $E_\lambda \rightarrow E_\mu$ in the quiver. Then the vector space $\Hom(\lien \otimes
\EE^\lambda,\EE^\mu)^P$ is non-zero, and in particular is 1-dimensional. Notice that by definition, being given by the action of
$\lien$, the arrow will send the weight $\lambda$ into a weight $\mu=\lambda+\alpha$, for some negative root 
$\alpha$ (for we have $\lieg_\alpha \cdot W_\lambda \subseteq W_{\lambda+\alpha}$).\\
Then fix the generator $f_{\lambda\mu}$ of $\Hom(\lien \otimes
\EE^\lambda,\EE^\mu)^P$ that takes $e_\alpha \otimes v^\lambda \mapsto v^\mu$.\\
Once all the generators are fixed, from (\ref{decomposizione theta}) write the map $\theta$ uniquely as:
\begin{equation}\label{decomposizione theta bis}
	\theta= \sum_{\lambda, \mu}g_{\lambda\mu}f_{\lambda\mu},
\end{equation}
and thus we can associate to the arrow $\lambda
\xrightarrow{f_{\lambda\mu}} \mu$ exactly the element
$g_{\lambda \mu}$ in $\Hom(V_\lambda,V_ \mu)$.\\ 
All in all: to the homogeneous vector bundle $E$ on $X$ we associate a representation $[E]$ of the quiver $\Q_X$ as follows. To the vertex $\E_\lambda$ we associate the vector space $V_\lambda$ from the decomposition (\ref{grE}). To an arrow $E_\lambda \rightarrow E_\mu$ we associate the element $g_{\lambda \mu} \in \Hom(V_\lambda,V_\mu)$ from the decomposition
(\ref{decomposizione theta}).
\end{defn}

A different choice of constants would have led to an equivalent construction. Moreover, the correspondence $E \mapsto [E]$ is functorial. 
For details, see \cite{OR} and \cite{ACGP}.

%%%%%%%%%%%%%%%%%%%%%%%%%%%%%%%%%%%%%%%%%%%%%%%%%%%%%%%%%%%%%%%%%%%%%%%%%%%%%%%%%%%%%%%%%%%%%%%%%%%%%%
\subsection{Second equivalence of categories}\label{sezione equivalenza categorie}
%%%%%%%%%%%%%%%%%%%%%%%%%%%%%%%%%%%%%%%%%%%%%%%%%%%%%%%%%%%%%%%%%%%%%%%%%%%%%%%%%%%%%%%%%%%%%%%%%%%%%%
From Proposition \ref{theta} it is clear that by putting the appropriate relations on the quiver, namely the equality 
$\theta \wedge \theta=\theta \varphi$, we can get the desired equivalence of categories. We give here a sketch of how one can derive these relations. For details, we refer the reader to \cite{Bora} and \cite{ACGP}.

Let $\lambda, \mu, \nu$ be any three vertices of the quiver $\Q_X$. What we need to do is translate the condition $\theta \wedge \theta=\theta \varphi$ in the ``language'' or the arrows of the quiver. To do this one defines an invariant morphism $\phi_{\lambda\mu\nu}$:
$$\Hom(\lien \otimes \EE^\lambda,\EE^\mu)^P \otimes \Hom(\lien \otimes \EE^\mu, \EE^\nu)^P \xrightarrow{\phi_{\lambda\mu\nu}}
\Hom(\wedge^2 \lien \otimes \EE^\lambda, \EE^\nu)^P$$
via contraction and wedge. 

In particular once the choice of scale has been made, there are fixed generators $f_{\lambda\mu}$, where
$f_{\lambda\mu}: n_\alpha \otimes v_\lambda \mapsto v_\mu$, and $\alpha=\lambda -\mu$. Then if we set $\beta=\mu -\nu$, all in all:
$$\phi_{\lambda\mu\nu}(f_{\lambda\mu} \otimes f_{\mu\nu}): (n_\alpha \wedge n_\beta) \otimes v_\lambda \mapsto v_\nu.$$

The natural morphism $\wedge^2 \lien \rightarrow \lien$ sending $n \wedge n' \mapsto [n,n']$ induces a morphism $\phi_{\lambda\nu}$:
$$\Hom(\lien \otimes \EE^\lambda,\EE^\nu)^P \xrightarrow{\phi_{\lambda\nu}} \Hom(\wedge^2 \lien \otimes \EE^\lambda, \EE^\nu)^P.$$

Theorem \ref{theta} together with the splitting (\ref{decomposizione theta bis}) entail that we have an equality in $\Hom(\wedge^2 \lien \otimes \EE^\lambda,\EE^\nu)^P$:
\begin{equation}\label{relazione da espandere}
\sum_{\lambda,\nu}\sum_\mu\Big(\phi_{\lambda\mu\nu}(f_{\lambda\mu} \otimes f_{\mu\nu})(g_{\lambda\mu}g_{\mu\nu})
+\phi_{\lambda\nu}([f_{\lambda\mu},f_{\mu\nu}])g_{\lambda\nu}\Big)=0.
\end{equation}

By expanding equality (\ref{relazione da espandere}) in a basis one gets a system of equations that the maps $g_{\gamma\delta}$ must satisfy, for every pair of vertices $\lambda,\nu$ and by imposing these same equations on the arrows of the quiver one gets the desired equivalence of categories.

% \begin{equation}\label{Rk}
% \mathcal{R}_k^{\lambda\nu}: \sum_{\mu \in \Q_0} x_{\lambda\mu\nu}^k g_{\lambda \mu}g_{\mu\nu}+ y_{\lambda\nu}^k g_{\lambda\nu}=0
% \end{equation}
% \begin{defn}\label{def relazioni}
% We define the relations $\mathcal{R}$ on the quiver $\Q_X$ as the
% ideal generated by all the equations (that with a slight abuse of
% notation we keep calling $\mathcal{R}_k$):
% $$\mathcal{R}_k^{\lambda\nu}: \sum_{\mu \in \Q_0} x_{\lambda\mu\nu}^k
% f_{\lambda \mu}f_{\mu\nu}+ y_{\lambda\nu}^k f_{\lambda\nu}=0,$$ for
% $k=1,\ldots,m_{\lambda \nu}$ and for any pair of weights
% $\lambda,\nu \in \Q_0$.
% \end{defn}

In Section \ref{esempi} we give details for the case $\SL_3/B$. We underline the fact that in general the problem of finding an explicit description 
of the relations is still open. They are known on $G/B$ \cite{ACGP}; Hille computed the relations for $\PP^2$ \cite{Hi1}, and 
Ottaviani and Rubei extended them to all Grassmannians using Olver maps \cite{OR}.

\begin{thm}\cite{BK, Hi1, ACGP}\label{BK}
Let $X$ a rational homogeneous variety of type $ADE$. The category of integral $\liep$-modules is equivalent to the category of finite dimensional
representations of the quiver $\Q_X$ with the relations defined in Section \ref{sezione equivalenza categorie}, and it is equivalent to the category of homogeneous
vector bundles on $X$.
\end{thm}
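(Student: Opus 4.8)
The statement packages together results of \cite{BK}, \cite{Hi1} and \cite{ACGP}, and the plan is to chain three equivalences. First, as recalled in Section~\ref{first}, the associated bundle construction $\EE\mapsto G\times^P\EE$ gives an equivalence between homogeneous vector bundles on $X=G/P$ and finite dimensional $P$-modules, and since $P$ is connected the latter is equivalent to the category of finite dimensional integral $\liep$-modules. So it suffices to build an equivalence between integral $\liep$-modules and finite dimensional representations of $\Q_X$ obeying the relations of Section~\ref{sezione equivalenza categorie}; integrality is automatic on the quiver side, since the vertices are indexed by $\lier$-dominant integral weights $\lambda\in\Lambda^+$.

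Next I would invoke Theorem~\ref{theta} to replace the category of $\liep$-modules by the category whose objects are pairs $(\FF,\theta)$, with $\FF$ a finite dimensional $\lier$-module and $\theta\colon\lien\otimes\FF\to\FF$ an $\lier$-morphism satisfying $\theta\wedge\theta=\theta\varphi$, and whose morphisms are the $\lier$-morphisms commuting with the $\theta$'s. Decomposing $\FF=\bigoplus_\lambda\EE^\lambda\otimes V_\lambda$ into $\lier$-isotypical components is exactly the datum of a vector space $V_\lambda$ at each vertex of $\Q_X$; feeding this into the decomposition (\ref{decomposizione theta}) and using Lemma~\ref{key lemma}, which bounds each $\Hom(\lien\otimes\EE^\lambda,\EE^\mu)^P$ by one dimension, the morphism $\theta$ is recorded by a single linear map $g_{\lambda\mu}\colon V_\lambda\to V_\mu$ for each arrow of $\Q_X$ and by nothing else, i.e.\ by a representation $[E]$ of the underlying quiver. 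Conversely every representation of the quiver underlying $\Q_X$ arises this way, so the construction is essentially surjective (before imposing the relations); and unravelling a morphism of pairs through the same isotypical decomposition yields precisely a tuple $(V_\lambda\to V'_\lambda)_\lambda$ intertwining the $g_{\lambda\mu}$ with the $g'_{\lambda\mu}$, and conversely, so the functor is fully faithful.

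There remains the heart of the matter: to show that, under this dictionary, the constraint $\theta\wedge\theta=\theta\varphi$ on $(\FF,\theta)$ is equivalent to the vanishing of the quiver relations on the $g_{\lambda\mu}$. For this I would expand the identity (\ref{relazione da espandere}) in a basis of each $\Hom(\wedge^2\lien\otimes\EE^\lambda,\EE^\nu)^P$, using the explicit normalizations $f_{\lambda\mu}\colon e_\alpha\otimes v^\lambda\mapsto v^\mu$ together with the maps $\phi_{\lambda\mu\nu}$, $\phi_{\lambda\nu}$ and the Jacobi identity in $\lien$, and check term by term that the resulting system of equations on the $g_{\lambda\mu}$ is exactly the defining ideal of relations of $\Q_X$. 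One must also note that a different choice of the scalars $v^\lambda$ and $e_\alpha$ rescales the generators $f_{\lambda\mu}$ and hence the relations, but produces an isomorphic quiver-with-relations and the same category up to equivalence, so the result is independent of these choices.

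I expect this last step --- matching the Jacobi-type identity coming from the bracket of $\lien$ against the combinatorics of the arrows of $\Q_X$, including the derived ones that are a priori consequences of the relations among the generating arrows --- to be the main obstacle; the rest is the formal chaining of equivalences. Granting it, the composite is an equivalence between homogeneous vector bundles on $X$, integral $\liep$-modules, and finite dimensional representations of $\Q_X$ with the relations of Section~\ref{sezione equivalenza categorie}, which is the assertion; for the full verification of the relations we refer to \cite{Bora} and \cite{ACGP}.
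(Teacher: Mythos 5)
Your outline is correct and matches the paper's own treatment: the paper does not prove Theorem~\ref{BK} but cites it, with Section~\ref{sezione equivalenza categorie} sketching exactly the reduction you describe (bundles $\leftrightarrow$ $P$-modules $\leftrightarrow$ integral $\liep$-modules, then Theorem~\ref{theta} and Lemma~\ref{key lemma} to pass to pairs $(\FF,\theta)$ and quiver data, with the relations extracted by expanding (\ref{relazione da espandere})). You correctly identify the matching of $\theta\wedge\theta=\theta\varphi$ with the ideal of relations as the substantive step and defer it to \cite{Bora} and \cite{ACGP}, just as the paper does.
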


%%%%%%%%%%%%%%%%%%%%%%%%%%%%%%%%%%%%%%%%%%%%%%%%%%%%%%%%%%%%%%%%%%%%%%%%%%%%%%%%%%%%%%%%%%%%%%%%%%%%%%
%%%%%%%%%%%%%%%%%%%%%%%%%%%%%%%%%%%%%%%%%%%%%%%%%%%%%%%%%%%%%%%%%%%%%%%%%%%%%%%%%%%%%%%%%%%%%%%%%%%%%%
\section{Computing sections}\label{parte nuova}
%%%%%%%%%%%%%%%%%%%%%%%%%%%%%%%%%%%%%%%%%%%%%%%%%%%%%%%%%%%%%%%%%%%%%%%%%%%%%%%%%%%%%%%%%%%%%%%%%%%%%%
%%%%%%%%%%%%%%%%%%%%%%%%%%%%%%%%%%%%%%%%%%%%%%%%%%%%%%%%%%%%%%%%%%%%%%%%%%%%%%%%%%%%%%%%%%%%%%%%%%%%%%

%%%%%%%%%%%%%%%%%%%%%%%%%%%%%%%%%%%%%%%%%%%%%%%%%%%%%%%%%%%%%%%%%%%%%%%%%%%%%%%%%%%%%%%%%%%%%%%%%%%%%%
\subsection{Preliminaries} 
%%%%%%%%%%%%%%%%%%%%%%%%%%%%%%%%%%%%%%%%%%%%%%%%%%%%%%%%%%%%%%%%%%%%%%%%%%%%%%%%%%%%%%%%%%%%%%%%%%%%%%

In all this section $X=G/P$ is a rational homogeneous variety, and $G$ is a complex semisimple Lie group of type $ADE$.

We start with a definition.
\begin{defn}\cite[Def. 5.10]{OR}\label{def sottofibrato}
Let $E$ be a homogeneous vector bundle on $X$, $=\gr E=\bigoplus_{i=1}^n V_{\lambda_i} \otimes E_{\lambda_i}$, so that 
$V=\bigoplus_{i=1}^n V_{\lambda_i}$ is a $\C\Q_X$-module. (For the sake of simplicity, we denote by $\C\Q_X$ the path 
algebra of the quiver with relations $\Q_X$, meaning that the algebra has been divided by the ideal of relations.)\\ 
For any subspace $V'\subseteq V$, $V'=\bigoplus_{j\in J \subseteq \{1,\ldots,n\}} V_{\lambda_j}$, 
the submodule generated by $V'$ defines a homogeneous subbundle of $E$. In a similar fashion, let $(V':\C\Q_X)=\{v \in V\:|\:fv \in
V',\:\forall\:f \in \C\Q_X\}$, which is a submodule, then the quotient
$V/(V':\C\Q_X)$ defines a homogeneous quotient of $E$. 
\end{defn}

\begin{lem}[Generalization of Prop. 6.4, \cite{OR}]\label{Prop 6.4}
Let $\lambda, \mu \in \Lambda^+$ be in two adjacent Bott chambers with
$\HH^i(E_\lambda) \simeq H^{i+1}(E_\mu)$ isomorphic as $G$-modules. Then $\mu-\lambda=k \xi_j$ for some $k \in \Z^+$ and for some weight $\xi_j$ of $\lien$, and moreover:
\begin{equation}\label{al posto di symk}
\dim \Hom (E_\lambda \otimes E_{k\xi_j},E_\mu)^G=1.
\end{equation}
In particular if $\HH^0(E_\lambda) \simeq \HH^1(E_\mu)$, then the weight $\xi_j \in \{\xi_1,\ldots,\xi_\ell\}$ is a (negative) simple root.
\end{lem}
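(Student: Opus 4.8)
The strategy is to reduce everything to an explicit statement about the Weyl group action and the weights of $\lien$. First I would use Theorem \ref{teoremaBott}: write $\lambda, \mu \in \Lambda^+$ and let $w_\lambda, w_\mu \in W$ be the unique Weyl elements with $w_\lambda(\lambda+g), w_\mu(\mu+g) \in \Lambda$. The hypothesis $\HH^i(E_\lambda) \simeq \HH^{i+1}(E_\mu)$ as $G$-modules forces, by part (1) of Bott's theorem, that $w_\lambda\cdot\lambda = w_\mu\cdot\mu$ (equality of highest weights of the resulting irreducible $\lieg$-modules), that $l(w_\mu) = l(w_\lambda)+1$, and that neither weight is sent to the boundary of $\Lambda$. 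Since $\lambda$ and $\mu$ lie in adjacent Bott chambers, by the remark at the end of Section \ref{first} the chambers share a wall, a hyperplane of the form $Y_{\xi_j} = H_{\xi_j} - g$ for one of the weights $\xi_j$ of $\lien$; and the two chambers are swapped by the reflection $s_j$ through that wall. This is exactly the situation where the reflections $s_j$ satisfying $s_j(\nu) = r_{\xi_j}(\nu+g) - g$ come in: we have $w_\mu = w_\lambda s_j$ (up to the affine normalization), so that $\mu + g$ and $\lambda + g$ differ by a reflection in $H_{\xi_j}$, giving $\mu + g - (\lambda+g) = -\frac{2(\lambda+g, \xi_j)}{(\xi_j,\xi_j)}\xi_j$. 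Hence $\mu - \lambda = k\xi_j$ with $k = -2(\lambda+g,\xi_j)/(\xi_j,\xi_j) \in \Z$, and $k>0$ because $\mu$ lies on the far side of the wall from $\lambda$ in the direction that increases length; in the $ADE$ case all roots have square-length $2$, so $k = -(\lambda+g,\xi_j) = (\mu+g,\xi_j) \in \Z^+$, and $\xi_j$ is a negative root as remarked in Definition \ref{def rappresentazione}.

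Next I would establish the multiplicity statement \eqref{al posto di symk}. The point is that $E_{k\xi_j}$ is, essentially by definition, the irreducible $R$-bundle whose highest weight is $k\xi_j$, and $\Hom(E_\lambda \otimes E_{k\xi_j}, E_\mu)^G = \Hom(E_\lambda \otimes E_{k\xi_j}, E_\mu)^R$ is computed purely inside representation theory of the reductive part $R$ (invariants under the unipotent part are automatic). Since $\mu = \lambda + k\xi_j$ and these are all $\lier$-dominant, one is asking for the multiplicity of $\EE^\mu$ inside $\EE^\lambda \otimes \EE^{k\xi_j}$ (working with duals appropriately), which is $\dim\Hom_R(\EE^{k\xi_j}, (\EE^\lambda)^* \otimes \EE^\mu)$. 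Here the extremal nature of the weight $k\xi_j$ — it is an extreme weight of the tensor product because $\lambda + k\xi_j$ is the unique $\lier$-dominant weight obtainable, forced by the fact that $\lambda$ and $\mu$ are separated by exactly one wall — should pin the multiplicity down to exactly $1$. This is the analogue of the $\Sym^k$ statement in \cite[Prop. 6.4]{OR}, where on a Hermitian symmetric space $\lien$ is abelian and the relevant bundle is literally $\Sym^k$ of the cotangent bundle's fiber; here $\lien$ need not be abelian, so one cannot say $\Sym^k$, but the one-dimensionality of the Hom space survives because it only uses that $k\xi_j$ is an extremal ray.

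Finally, for the last assertion: when $i = 0$ we have $l(w_\lambda) = 0$, so $w_\lambda = \mathrm{id}$ (up to the chamber it represents), $\lambda + g \in \Lambda$ already, and $l(w_\mu) = 1$, i.e. $w_\mu$ is a single reflection $r_\alpha$. For a single reflection to send $\mu+g$ into the fundamental chamber while $\mu$ sits in a Bott chamber of length $1$ adjacent to the length-$0$ chamber, the wall crossed must be a wall of $\Lambda$ itself, and the walls of the fundamental Weyl chamber are precisely the hyperplanes $H_{\alpha_i}$ orthogonal to the simple roots $\alpha_i$. So $\xi_j = \alpha_i$ up to sign for some simple root; combined with $\xi_j$ being a negative root of $\lien$, we get $\xi_j = -\alpha_i$, a negative simple root, which is exactly one of the generating weights $\xi_1, \ldots, \xi_\ell$ by the convention fixed in Remark \ref{quante frecce}.

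The main obstacle I expect is the multiplicity claim \eqref{al posto di symk}: in the non-Hermitian case $\lien$ is noncommutative and $E_{k\xi_j}$ is not a symmetric power of anything natural, so the clean combinatorial argument of \cite{OR} does not transcribe directly. The careful point is to show that $\lambda$ and $\mu$ being in \emph{adjacent} Bott chambers (not merely Bott chambers with isomorphic cohomology shifted by one) forces $\mu - \lambda$ to be a positive multiple of a \emph{single} $\xi_j$ and moreover forces the weight $k\xi_j$ to be extremal in the relevant tensor product — ruling out that some other decomposition of $\mu - \lambda$ into weights of $\lien$ could contribute extra multiplicity. I would handle this by a dimension/chamber-geometry argument: two adjacent chambers differ by exactly one reflecting hyperplane, so the affine Weyl element changes by exactly one reflection, which rigidly controls $\mu - \lambda$ and leaves no room for higher multiplicity.
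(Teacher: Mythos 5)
Your proposal is correct and, for the first and third assertions, takes essentially the route the paper intends: the paper's own proof simply observes that the argument of \cite{OR} uses only that all roots have equal length (which is your explicit reflection-through-the-common-wall computation $\mu+g=r_{\xi_j}(\lambda+g)$, $k=-(\lambda+g,\xi_j)$), and that length-one Bott chambers are separated from the dominant chamber by a simple reflection. The one place where you genuinely diverge is the multiplicity statement \eqref{al posto di symk}: the paper disposes of it by citing Littelmann's generalization of the Pieri formula, whereas you argue by extremality of the weight $k\xi_j$. Your route is actually more elementary and self-contained, but as written it is asserted rather than proved, and the phrase ``$\lambda+k\xi_j$ is the unique $\lier$-dominant weight obtainable'' is not literally true (the tensor product has many dominant constituents). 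The correct one-line completion is the Cartan-component argument: since $\mu=\lambda+k\xi_j$ is the sum of the extreme weights of the two factors, any expression $\mu=\nu_1+\nu_2$ with $\nu_1$ a weight of $\EE^\lambda$ and $\nu_2$ a weight of $\EE^{k\xi_j}$ forces $\nu_2=k\xi_j-(\text{sum of positive roots of }\lier)$ and hence $\nu_1$ strictly above $\lambda$ unless $(\nu_1,\nu_2)=(\lambda,k\xi_j)$; so the weight $\mu$ has multiplicity exactly one in $\EE^\lambda\otimes\EE^{k\xi_j}$, and therefore $\EE^\mu$ occurs exactly once. With that substitution your argument is complete and buys independence from the path-model machinery; the paper's citation buys brevity and covers the bookkeeping of which $\lier$-extreme weight one must use under the duality conventions for $\EE^\lambda$.
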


\begin{proof}
For the first part of the statement, notice that the only assumption made in \cite{OR} is that all
the roots have the same length, i.e. that the group $G$ is of type $ADE$.\\
Formula (\ref{al posto di symk}) follows from a generalization of the Pieri formula, see \cite{Lit}.\\
The second part of the statement is just a rephrasing of the fact that all length $1$ Bott chambers are separated from the dominant chamber 
by a simple reflection, by Bott theorem \ref{teoremaBott}.
\end{proof}

\begin{rem}
The word ``adjacent'' in the statement of Lemma \ref{Prop 6.4} means that the two Bott chambers are symmetric
with respect to an hyperplane $Y_j$ orthogonal to one of the roots, or in other words that the reflection $s_j$ exchanges them. 
We take into account all weights, not just
the generating ones. Thus the hierarchy between generating and derived arrows of the quiver 
translates in this setting in two ways of being adjacent for the chambers, depending on
whether or not the hyperplane with respect to which we are
reflecting is orthogonal to an element of $\lien/[\lien,\lien]$. 
% Indeed any reflection $s_{ij}$ induced by a root $\alpha_{ij}=[\alpha_i,\alpha_j]$ can be
% obtained as a composition of the reflections $s_i$ and $s_j$.
\end{rem}

%%%%%%%%%%%%%%%%%%%%%%%%%%%%%%%%%%%%%%%%%%%%%%%%%%%%%%%%%%%%%%%%%%%%%%%%%%%%%%%%%%%%%%%%%%%%%%%%%%%%%%
\subsection{$\mathbf{A_m}$-type bundles}\label{spaghetti}
%%%%%%%%%%%%%%%%%%%%%%%%%%%%%%%%%%%%%%%%%%%%%%%%%%%%%%%%%%%%%%%%%%%%%%%%%%%%%%%%%%%%%%%%%%%%%%%%%%%%%%

In order to compute sections of homogeneous bundles using quiver representations, 
we first need to deal with cohomology of a special type of bundles, that we call ``$A_m$-type bundles''. Such bundles 
have convenient splitting properties entailed by quiver theory.

\begin{defn}\label{def spaghetto}
Let $E$ be a homogeneous bundle on $G/P$. Let $\Q|_E$ denote the subquiver of $\Q_{G/P}$ given by all vertices where 
$[E]$ is non-zero and all arrows connecting any two such vertices.
The support of $\Q|_E$ has at most $\rk(E)$ vertices. The representation $[E]$ of $Q_{G/P}$ induces a representation of the subquiver $\Q|_E$.\\
We call $E$ an \emph{$A_m$-type bundle} if the quiver $\Q|_E$ is of type $A_m$, i.e. if in the decomposition $\gr E=\oplus E_\lambda \otimes V_\lambda$, $V_\lambda$ is zero outside a path connecting the vertices $\{\lambda + p\xi_j\:|\: 0 \leq p \leq k \}$.
\end{defn}

\begin{rem} As a consequence of the relations of the quiver and of the equivalence of categories, every $A_m$-type bundle in the direction of derived arrows is completely reducible.
\end{rem}

Moreover, if $S$ is an $A_m$-type bundle, then for the representation $[S]$ holds the following well-known theorem, see Gabriel [GR].
\begin{thm}[Gabriel]\label{Gabriel}
Every representation of the $A_m$ quiver is the direct sum of irreducible representations with dimension vector
$$(0,0,\ldots,0,1,1,\ldots,1,0,\ldots,0),$$
where the nontrivial linear maps are isomorphisms. In particular if the direction is along a derived arrow, there is just one $1$.
\end{thm}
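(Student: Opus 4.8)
The plan is to derive this from the classification of the indecomposable representations of a type $A_m$ quiver (indecomposable in the terminology of the statement): the indecomposables are exactly the \emph{interval representations} $I_{[a,b]}$, with $\C$ at the vertices $a,a+1,\dots,b$, zero elsewhere, and the identity along every arrow joining two consecutive vertices of $[a,b]$. Such a representation is indecomposable because its endomorphism ring is $\C$, hence local; its nonzero maps are isomorphisms and its dimension vector has exactly the shape in the statement. Granting the classification, every finite-dimensional representation is a direct sum of indecomposables --- Krull--Schmidt applies since $A_m$, being a tree, has no oriented cycles, so its path algebra is finite-dimensional --- and the theorem follows. (In our application all arrows point the same way, so one may as well take the linearly oriented quiver, which shortens the bookkeeping below; I write $\phi_\alpha$ for the linear map attached to an arrow $\alpha$.)

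To prove the classification I would induct on $m$, constructing the decomposition explicitly. For $m=1$ it is just that a vector space is a sum of lines. For the inductive step, single out an end vertex, say $m$, and its unique arrow $\alpha$ to $m-1$. First peel off the part ``trivial at $m$'': if $\alpha$ points away from $m$ put $K=\Ker\phi_\alpha\subseteq V_m$, and if it points towards $m$ let $K\subseteq V_m$ be a complement of $\im\phi_\alpha$; in either case $K$ placed at the vertex $m$ is a direct summand isomorphic to copies of $I_{[m,m]}$, and after discarding it $\phi_\alpha$ becomes injective, resp.\ surjective. Next restrict the representation to the full subquiver on $\{1,\dots,m-1\}$, a quiver of type $A_{m-1}$, and decompose it by induction into intervals; only the intervals having $m-1$ as an endpoint contribute to $V_{m-1}$, each a single line, so $V_{m-1}=\bigoplus_{k\in T}\C$ with $T$ indexing those intervals. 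Finally, reattach $V_m$ through $\phi_\alpha$, now an injection $V_m\hookrightarrow\bigoplus_{k\in T}\C$, resp.\ a surjection $\bigoplus_{k\in T}\C\twoheadrightarrow V_m$: using an arbitrary change of basis of $V_m$ together with the automorphisms of the $A_{m-1}$-decomposition acting on $V_{m-1}$, one normalizes $\phi_\alpha$ so that its image, resp.\ its kernel, is a coordinate subspace $\bigoplus_{k\in S}\C$. Then $\phi_\alpha$ matches $V_m$ with the span of the coordinates indexed by $S$, resp.\ by $T\setminus S$, compatibly with the decomposition, so the representation is the direct sum of the interval summands coming from the subquiver --- those matched with a basis vector of $V_m$ being lengthened to include the vertex $m$ --- together with the copies of $I_{[m,m]}$ set aside at the start. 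Unwinding this shows all the nonzero maps are identities.

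The step with real content is the last normalization: the automorphisms available at $m-1$ do not form all of $\GL(V_{m-1})$ but only a parabolic subgroup --- this comes from a short computation of the $\Hom$-spaces between the interval summands meeting $m-1$, which are at most one-dimensional and form a ``triangular'' pattern governed by the arrow orientations --- so one needs a Gaussian-elimination argument adapted to the flag preserved by that parabolic (a Bruhat-type statement) to bring the relevant subspace of $\bigoplus_{k\in T}\C$ into coordinate form; the rest is bookkeeping. For the final sentence of the statement: if the supporting path $\Q|_S$ runs along derived arrows, then by the Remark preceding the theorem $S$ is completely reducible, so all the structure maps of $[S]$ vanish, and the only interval representations with vanishing maps are the singletons $I_{[a,a]}$; hence each indecomposable summand has dimension vector with a single nonzero entry.
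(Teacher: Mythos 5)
The paper does not prove this statement at all: it is quoted as a classical result with a citation to Gabriel, so any proof you supply is by definition a different route. Your argument is a correct (if compressed) version of the standard elementary induction-on-vertices proof of Gabriel's theorem in type $A$: split off the end vertex, reduce to the case where the end map is injective (resp.\ surjective), decompose the $A_{m-1}$-restriction by induction, and re-glue. The one step you rightly flag as having real content -- that the image of $\Aut$ of the $A_{m-1}$-part in $\GL(V_{m-1})$ is only a parabolic, so that normalizing $\im\phi_\alpha$ (resp.\ $\ker\phi_\alpha$) to a coordinate subspace requires a Bruhat/row-echelon argument rather than an arbitrary base change -- is indeed the crux, and the statement you need (every point of the Grassmannian lies in the $B$-orbit of a coordinate subspace, for the Borel inside that parabolic) is true; it would be worth spelling out the one-line computation that $\Hom(I_{[a,m-1]},I_{[b,m-1]})$ is at most one-dimensional with the triangular vanishing pattern, since that is what identifies the parabolic. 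Two small remarks: you do not actually need Krull--Schmidt for the statement as used in the paper (only existence of a decomposition into intervals, which your induction constructs directly; uniqueness is never invoked); and your treatment of the final sentence correctly leans on the Remark preceding the theorem -- that clause is not part of Gabriel's theorem proper but a consequence of the relations of $\Q_X$ forcing complete reducibility along derived arrows, hence vanishing structure maps and singleton intervals. The standard textbook alternative is via reflection functors or Auslander--Reiten theory; your hands-on version is more elementary and entirely adequate here.
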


\begin{thm}\label{teorema per gli spaghetti}
Let $S$ be an $A_m$-type bundle on $X$ with $\HH^0(\gr S) \neq 0$. Let $E_\lambda$ be an irreducible summand of $\gr S$ such that $\HH^0(E_\lambda)=V$, for $V=\Sigma^\lambda$ nonzero irreducible $G$-module. Then:
\begin{enumerate}
	\item If among the summands of $\gr S$ there is an $E_\mu$ such that $\HH^1(E_\mu)=V$, consider in the quiver $\Q|_S$ the path from $E_\lambda$ to $E_\mu$. By composing the linear maps corresponding to this path in the representation $[S]$, we get a linear map $\sigma_0^V: V_\lambda \rightarrow V_\mu$. Then the isotypical component $\HH^0(S)^V=V \otimes (\ker \sigma_0^V)$. 
	\item If there is no such $E_\mu$, then $\HH^0(S)^V= \HH^0(E_\lambda \otimes V_\lambda)=V^{\oplus \dim V_\lambda} $. 
\end{enumerate}
\end{thm}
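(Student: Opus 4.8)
The plan is to reduce the computation to Gabriel's theorem (Theorem \ref{Gabriel}), handling the cohomology of each indecomposable summand of $[S]$ separately and then reassembling. First I would use the decomposition $\gr S = \bigoplus_\nu E_\nu \otimes V_\nu$, where the $V_\nu$ are supported on a path of type $A_m$, say along the vertices $\{\nu + p\xi_j : 0 \le p \le k\}$. By Gabriel's theorem the representation $[S]$ of $\Q|_S$ splits as a direct sum of ``interval'' indecomposables $R_{[a,b]}$, each of which is the quiver representation of a homogeneous $A_m$-type bundle $S_{[a,b]}$ whose graded pieces are exactly $E_{\nu + a\xi_j} \oplus \cdots \oplus E_{\nu + b\xi_j}$ with all connecting maps isomorphisms. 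Since the functor $E \mapsto [E]$ is an equivalence of categories (Theorem \ref{BK}), this gives $S \cong \bigoplus_{[a,b]} S_{[a,b]}$ as homogeneous bundles, and hence $\HH^0(S) = \bigoplus_{[a,b]} \HH^0(S_{[a,b]})$. So it suffices to prove the statement for a single interval bundle $S_{[a,b]}$, i.e. one with $\dim V_\nu \le 1$ everywhere and all maps isomorphisms; in that case $\sigma_0^V$ is either an isomorphism (contributing $\Ker \sigma_0^V = 0$) or the zero map / undefined (contributing the full $V_\nu$), and I must match this against the actual $\HH^0$.

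For an interval bundle I would argue by induction on the length of the interval, using the short exact sequences of homogeneous bundles coming from the filtration of $S_{[a,b]}$ in the $\xi_j$-direction. The base case is an irreducible bundle, where the claim is just the Borel--Weil--Bott theorem (Theorem \ref{teoremaBott}): if $\HH^0(E_\lambda) = V \neq 0$ then $\lambda$ lies in a length-$0$ Bott chamber, and there is no $E_\mu$ among the (single) summand with $\HH^1(E_\mu) = V$, so case (2) applies trivially. For the inductive step, given an interval bundle $S$ containing the summand $E_\lambda$ with $\HH^0(E_\lambda) = V$, I would split off either the sub- or the quotient bundle at one end of the path and use the long exact cohomology sequence. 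The key local input is Lemma \ref{Prop 6.4}: the only way $\HH^0(E_\lambda)$ can fail to survive to $\HH^0(S)$ is if, moving along the path, we reach some $E_\mu$ in the adjacent length-$1$ Bott chamber with $\HH^1(E_\mu) \simeq V$, and then the connecting map in the long exact sequence is, up to the distinguished isomorphism, precisely the composite of the quiver maps — i.e. $\sigma_0^V$ tensored with $\mathrm{id}_V$. This identifies $\HH^0(S)^V$ with $V \otimes \Ker \sigma_0^V$ in case (1), and shows that if the path never meets such an $E_\mu$ then the connecting maps all vanish on the $V$-isotypical part and $\HH^0(E_\lambda \otimes V_\lambda)$ injects with no higher obstruction, giving case (2).

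The step I expect to be the main obstacle is the identification, in the inductive step, of the connecting homomorphism $\delta\colon \HH^0 \to \HH^1$ of the long exact sequence with the map induced by the quiver representation $[S]$ — that is, showing that the coboundary in sheaf cohomology is computed by composing the linear maps $g_{\lambda\mu}$. This is the technical heart: one must trace through the construction of $[E]$ from Definition \ref{def rappresentazione} (the decomposition of $\theta$ in the basis $f_{\lambda\mu}$), match it with the extension class of the relevant short exact sequence, and invoke the compatibility of Borel--Weil--Bott with the $\xi_j$-direction reflections via the $s_j$ (as recalled after Theorem \ref{teoremaBott}). The vanishing of $\HH^1$ and $\HH^2$ of the irreducible pieces not in an adjacent chamber — needed to make the long exact sequence collapse so that no higher-order corrections appear — follows from Bott's theorem and the fact that consecutive Bott chambers have consecutive lengths, so I would record that as a preliminary observation. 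Once the connecting map is identified, the rest is bookkeeping over the Gabriel decomposition.
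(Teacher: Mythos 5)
Your overall skeleton agrees with the paper's: both arguments invoke Gabriel's theorem to split $[S]$ into interval indecomposables with one-dimensional multiplicity spaces, observe that the split/zero-map cases are immediate, and reduce everything to showing that an indecomposable $A_m$-type bundle joining $E_\lambda$ (with $\HH^0=V$) to $E_\mu=E_{s_j(\lambda)}$ (with $\HH^1=V$), all of whose quiver maps are isomorphisms, has vanishing $V$-isotypical cohomology. The problem is that the step you defer as ``the main obstacle'' --- identifying the sheaf-cohomology coboundary $\HH^0(E_\lambda)\to\HH^1(E_\mu)$ with the composite quiver map, or equivalently showing that this coboundary is nonzero whenever the extension is nonsplit --- is the entire content of the theorem, and your proposed route for it (``trace through the construction of $[E]$, match it with the extension class, invoke compatibility of Borel--Weil--Bott with the reflections $s_j$'') is not a proof. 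The coboundary is cup product with the class in $\Ext^1(E_\lambda,E_\mu)$; knowing that class is nonzero does not by itself imply that the induced map on cohomology is nonzero, and Borel--Weil--Bott gives no information about this pairing. Nothing in Definition \ref{def rappresentazione} lets you read off a \v{C}ech or Dolbeault connecting map directly.

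The paper closes this gap with an argument your proposal does not contain. It first handles the universal case at the vertices $\lambda'=0$, $\mu'$ of the two Bott chambers: for the unique nonsplit extension $0\to E_{\mu'}\to A\to\OO_X\to 0$, a nonzero section would give a $G$-invariant morphism $\OO_X\to A$, hence a morphism of quiver representations $[\OO_X]\to[A]$, and one checks directly on the two-vertex quiver $\Q|_A$ that any such morphism is zero; so $\HH^0(A)=0$ and $A$ is singular. The vanishing is then transferred from $A$ to the given bundle $S$ via the auxiliary bundle $A\otimes V$, Proposition \ref{Prop 6.8} of Ottaviani--Rubei (which isolates $E_\mu$ as the unique summand of the relevant graded with cohomology $V$), and a sub/quotient d\'evissage whose last quotient decomposes, by Gabriel again, into copies of $S$. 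You would need to supply this (or an equivalent) mechanism before your induction on the interval length can run; as written, the inductive step rests on an unproved identification.
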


\begin{proof} Using Theorem \ref{Gabriel}, the only non-trivial case is when the quiver $\Q|_S$ connects a dominant weight $\lambda$ 
with its image under a simple reflection in one of the length $1$ Bott chambers. In this case Lemma \ref{Prop 6.4} applies, and 
$\mu=s_j(\lambda)$ is such that $\mu-\lambda=k\xi_j$ for a generating element $\xi_j$. 

Theorem \ref{Gabriel} also implies that there is no loss in generality in assuming that all multiplicity spaces are 1-dimensional. 

If along the path from $\lambda$ to $\mu$ one of the maps in the representation $[S]$ is zero, then the linear map $V_\lambda \rightarrow V_\mu$ is zero as well, and so is the map $ V \xrightarrow{\sigma_0^V =0} V$. If this is the case, the bundle $S$ splits as $S = S_1 \oplus S_2$, where $\HH^1(S_1)^V=\HH^0(S_2)^V=V$ and $\HH^0(S_1)^V=\HH^1(S_2)^V=0$. Then we have $\HH^0(S)^V=V=\ker \sigma_0^V$.\\

Suppose now that all the maps in $[S]$ corresponding to the path from $\lambda$ to $\mu$ are nonzero. We need to show that in this case $S$ has vanishing cohomology.

\begin{lem}\label{A singolare}
Let $\lambda'$ (respectively $\mu'$) be the vertex of the Bott chamber containing $\lambda$ (respectively $\mu$). Under our assumption 
$\lambda'=0$ is the zero weight. Consider the irreducible bundles $E_{\lambda'}=\OO_X$ and $E_{\mu'}$. Let $A$ be the unique (up to scale) 
indecomposable bundle in the extension:
$$0 \rightarrow E_{\mu'} \rightarrow A \rightarrow \OO_X \rightarrow 0.$$
Then all the cohomology of $A$ vanishes.
\end{lem}
\begin{proof} 
%%\begin{enumerate}
%%\item 
Suppose $\HH^0(A)=\Hom(\OO,A) \neq 0$. Then there is a $G$-invariant morphism $\OO_X \xrightarrow{\varphi} A$. 
In particular it is a morphism $\varphi: [\OO_X] \rightarrow [A]$ between representations of the same quiver. 
Notice that the subquiver $\Q|_A$ simply consists of the two vertices $\OO_X$ and $E_{\mu'}$ connected by the arrow corresponding to the weight $\xi_j$ defined above:
$$\Q|_A \:\: =\:\: \OO_X \rightarrow E_{\mu'}$$
The representation of $\Q|_A$ associated to the irreducible bundle $\OO_X$ is:
$$[\OO_X] \:\: =\:\: \C \rightarrow 0$$
The representation of $\Q|_A$ associated to the bundle $A$ is:
$$[A] \:\: =\:\: \C \xrightarrow{a\cdot \Id} \C$$
where $a$ is the non-zero constant determining the element $A$ in the space $\Ext^1(\OO_X,E_{\mu'})^G=\C$.
Any such morphism $\varphi: [\OO_X] \rightarrow [A]$ is forced to be zero by the very definition of morphism of quiver representation. 
\end{proof}

\begin{prop}\cite[Prop 6.8]{OR}\footnote{The proof in \cite{OR} applies to the general case with the only caveat that we need to take only the 
	maximal weights of $\gr H$}.\label{Prop 6.8}
Let $\nu$ be a weight such that $\HH^1(E_\nu)$ is some $G$-module $\Sigma \neq 0$. Let $\nu'$ be the vertex of the Bott chamber containing $\nu$. Let $H$ be the vector bundle $H= G \times^P (\EE^{\nu'} \otimes \Sigma)$. Then $E_\nu$ is the only direct summand of $\gr H$ whose cohomology in any degree is $\Sigma$.	
\end{prop}

\noindent \emph{End of proof of Theorem \ref{teorema per gli spaghetti}.} Let $A$ be the vector bundle defined in Lemma \ref{A singolare}, and say $A=G \times^P \mathcal{A}$. Let $A \otimes V$ denote the vector bundle $G \times^P (\mathcal{A} \otimes V)$.\\ 
Consider the subbundle of $A \otimes V$ generated by all direct summands (of the graded) isomorphic to $E_\lambda$, that is, take $V'=V_\lambda$ in 
Definition \ref{def sottofibrato}. Call this subbundle $K$. Then there is an exact sequence:
$$0 \longrightarrow K \longrightarrow A \otimes V \longrightarrow (A \otimes V)/K \longrightarrow 0.$$
By Proposition \ref{Prop 6.8}, both $\HH^i(K)^V$ and $\HH^i((A \otimes V)/K)^V$ are nonzero for at most $i=0,1$. Indeed, $\gr K$ contains all the direct summands of $\gr(A \otimes V)$ isomorphic to $E_\mu$. If it didn't, then $E_\mu$ would be contained in the quotient $(A \otimes V)/K$ and hence 
$\HH^1((A \otimes V)/K)^V \neq 0$. Now since $A$ is singular, so is $A \otimes V$ (because from an holomorphic point of view $\HH^i(A \otimes V)$ is just $\dim V$ copies of $\HH^i(A)$). Hence $\HH^1((A \otimes V)/K)^V \neq 0$ implies $\HH^2(K)^V \neq 0$, and this is a contradiction. Thus $\HH^i((A \otimes V)/K)^V=0$ for all $i$'s, and the same for $\HH^i(K)^V$.\\
Finally, let $K'$ be the quotient of $K$ obtained by restricting the quiver representation to the 
path joining the vertices corresponding to $E_\lambda$ and $E_\mu$. Note that the arrows of $K'$ are only generating arrows, and that's why we get a quotient. 
We have:
$$0 \rightarrow K'' \rightarrow K \rightarrow K' \rightarrow 0.$$
Now by construction the kernel $K''$ is such that $\HH^i(\gr K'')^W=0$, and hence $\HH^i(K'')^W=0$. But also $\HH^i(K)^W=0$, so the same holds for 
$K'$. By Theorem \ref{Gabriel}, $K'$ decomposes into the direct sum of several copies of the original $A_m$-type bundle $S$, by definition of $K$, hence we are done.
\end{proof}

\begin{rem}
We underline the fact that in order to compute sections of an $A_m$-type bundle $S$ it is enough to compute the maps of the associated quiver representation once and for all. In particular in many cases we do not even need to compute explicitly these maps, all we need is the information on wether these maps are zero or not.
\end{rem}

%%%%%%%%%%%%%%%%%%%%%%%%%%%%%%%%%%%%%%%%%%%%%%%%%%%%%%%%%%%%%%%%%%%%%%%%%%%%%%%%%%%%%%%%%%%%%%%%%%%%%%
\subsection{The general case}\label{general}
%%%%%%%%%%%%%%%%%%%%%%%%%%%%%%%%%%%%%%%%%%%%%%%%%%%%%%%%%%%%%%%%%%%%%%%%%%%%%%%%%%%%%%%%%%%%%%%%%%%%%%

Let $E$ be a homogeneous vector bundle on $X=G/P$. We want to show how to compute its sections by using 
the associated quiver representation $[E]$. This is done via the study of the map $c_0$, that we construct using the previous results on $A_m$-type bundles.

\begin{lem}\label{distinguished iso}
	Let $E$ be a homogeneous vector bundle on $X$ and let $E_\lambda$ be an irreducible summand of $\gr E$ such that $\HH^0(E_\lambda)\neq 0$. Let $E_\mu$ be such that $\HH^0(E_\lambda)=\HH^1(E_\mu)$. Then we can construct a distinguished isomorphism $j_{\lambda\mu}:\HH^0(E_\lambda) \rightarrow \HH^1(E_\mu)$.
\end{lem}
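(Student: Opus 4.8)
The plan is to realize both $\HH^0(E_\lambda)$ and $\HH^1(E_\mu)$ as cohomology of a single auxiliary $A_m$-type bundle, and to obtain $j_{\lambda\mu}$ as a composite of connecting homomorphisms. First I would record the combinatorics. Since $\HH^0(E_\lambda)=\HH^1(E_\mu)\neq 0$, Lemma \ref{Prop 6.4} applies, so $\lambda$ lies in the length-$0$ Bott chamber and $\mu$ in an adjacent length-$1$ one, with $\mu-\lambda=k\xi_j$ for some $k\in\Z^+$ and some negative simple root $\xi_j$, and $\dim\Hom(E_\lambda\otimes E_{k\xi_j},E_\mu)^G=1$. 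By the Borel--Weil--Bott theorem \ref{teoremaBott}, $\HH^\bullet(E_\lambda)$ is the irreducible $G$-module $V:=\Sigma^\lambda$ concentrated in degree $0$, while $\HH^\bullet(E_\mu)$ is a copy of $V$ concentrated in degree $1$. As in the proof of Theorem \ref{teorema per gli spaghetti}, the weights $\nu_p:=\lambda+p\xi_j$, $0\le p\le k$, are all $\lier$-dominant, hence genuine vertices of $\Q_X$ joined by a directed path $\nu_0\to\nu_1\to\cdots\to\nu_k$ of generating arrows; moreover $(\nu_p+g,\xi_j)=2p-k$, so $\nu_p+g$ is singular (on $H_{\xi_j}$) exactly for $p=k/2$ and is regular otherwise.

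Next I would introduce the auxiliary bundle. By the equivalence of categories (Theorem \ref{BK}), Lemma \ref{key lemma} and Gabriel's theorem \ref{Gabriel}, there is a unique, up to isomorphism, indecomposable $A_m$-type bundle $S$ with $\gr S=\bigoplus_{p=0}^k E_{\nu_p}$, all multiplicity spaces one-dimensional, and all the path maps of $[S]$ nonzero (equivalently, after rescaling the bases, equal to the identity). Interpreting Definition \ref{def sottofibrato} at the level of quiver representations, the subbundle $R\subseteq S$ generated by $\bigoplus_{p\ge 1}V_{\nu_p}$ satisfies $S/R\cong E_\lambda$; inside $R$, the subbundle generated by $V_{\nu_k}$ is $E_\mu$; and the quotient $\bar R:=R/E_\mu$ is supported on the path $\nu_1\to\cdots\to\nu_{k-1}$ --- it is again an indecomposable bundle of the same shape (now with $\nu_1$ in a length-$0$ chamber and $\nu_{k-1}=s_j(\nu_1)$), or it is $0$ when $k=1$, or a single singular irreducible bundle when $k=2$. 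This produces the two short exact sequences
$$0\longrightarrow R\longrightarrow S\longrightarrow E_\lambda\longrightarrow 0,\qquad\qquad 0\longrightarrow E_\mu\longrightarrow R\longrightarrow\bar R\longrightarrow 0.$$

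Finally I would run the long exact cohomology sequences. By Theorem \ref{teorema per gli spaghetti} together with the all-maps-nonzero case of its proof, $\HH^\bullet(S)=0$; applying the same statement to $\bar R$ (and disposing of $k\le 2$ either trivially or by Bott vanishing in \ref{teoremaBott}) gives $\HH^\bullet(\bar R)=0$. The first sequence then produces an isomorphism $\delta\colon\HH^0(E_\lambda)\xrightarrow{\sim}\HH^1(R)$, and the second an isomorphism $\iota_*\colon\HH^1(E_\mu)\xrightarrow{\sim}\HH^1(R)$; I would define $j_{\lambda\mu}:=\iota_*^{-1}\circ\delta$. For $k=1$ this is nothing but the coboundary map of $0\to E_\mu\to S\to E_\lambda\to 0$, which is the interpretation of the coboundary promised in the introduction. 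Since $S$, $E_\lambda$ and $E_\mu$ are rigid (indecomposable, hence with endomorphism ring $\C$) and the connecting maps of a fixed short exact sequence carry no choices, $j_{\lambda\mu}$ depends only on the pair $(\lambda,\mu)$ once the scale conventions of Definition \ref{def rappresentazione} have been fixed.

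The step I expect to be the main obstacle is not the cohomological bookkeeping, which is routine given Theorem \ref{teorema per gli spaghetti}, but the verification that the intermediate weights $\nu_p$ really are admissible vertices of $\Q_X$ and that the representation $S$ satisfies the relations of $\Q_X$ --- which are not explicitly known in general. This has to be handled exactly as in the proof of Theorem \ref{teorema per gli spaghetti}, by using that a representation supported on a single directed path in a generating direction is automatically a legitimate representation of the quiver with relations.
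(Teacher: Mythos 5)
Your proposal is correct and follows essentially the same route as the paper: the same auxiliary $A_m$-type bundle $S$ with the two short exact sequences $0\to Z\to S\to E_\lambda\to 0$ and $0\to E_\mu\to Z\to Z/E_\mu\to 0$ (your $R$ and $\bar R$ are the paper's $Z$ and $Z/E_\mu$), the same vanishing of $\HH^\bullet(S)$ and $\HH^\bullet(Z/E_\mu)$ via Theorem \ref{teorema per gli spaghetti}, and the same definition $j_{\lambda\mu}=i^{-1}\circ\partial$. Your normalization of $S$ as the indecomposable representation with all path maps nonzero (rather than the paper's ``same maps as $[E]$'') is a minor but sensible clarification that guarantees the connecting maps are isomorphisms independently of $E$.
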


\begin{proof}
	Say $E_\mu$ irreducible summand of $\gr E$ is such that $\mu=s_j(\lambda)$ for some $j$ and $\HH^0(E_\lambda)=\HH^1(E_\mu)$. 
	Consider in the quiver $\Q_X$ the path from $E_\lambda$ to $E_\mu$:
	from Lemma \ref{Prop 6.4} we know that it is a sequence of arrows all in the  same direction $\xi_j$ (corresponding to a generating arrow).

	Let $S$ denote the homogeneous $A_m$-type bundle starting from $E_\lambda$ and ending in $E_\mu$, with the same quiver representation maps as for $[E]$. Note that $S$ might neither be a subbundle nor a quotient of $E$.\\ 
	The irreducible bundle $E_\lambda$ is a quotient of $S$ (it is a sink in the quiver $\Q|_S$):
	\begin{equation}\label{sequenza 1}	
	0 \rightarrow Z \rightarrow S \rightarrow E_\lambda \rightarrow 0,
	\end{equation}
	while the irreducible bundle $E_\mu$ injects itself in $Z$ (being a source):
	\begin{equation}\label{sequenza 2}	
	0 \rightarrow E_\mu \rightarrow Z \rightarrow Z/E_\mu \rightarrow 0
	\end{equation}

	From sequence (\ref{sequenza 1}) and Theorem \ref{teorema per gli spaghetti} we have an isomorphism:
	$$\HH^0(E_\lambda) \xrightarrow{\partial} \HH^1(Z)^W,$$
	and from sequence  (\ref{sequenza 2}) we have another isomorphism:
	$$\HH^1(E_\mu) \xrightarrow{i} \HH^1(Z)^W,$$
	and hence we get a distinguished isomorphism:
	\begin{equation}
		j_{\lambda\mu}=i^{-1} \circ \partial:\HH^0(E_\lambda) \rightarrow \HH^1(E_\mu).
	\end{equation}	
\end{proof}	
	
\begin{defn}[The morphism $c_0$]\label{def mappe c_i}
In the same setting as above, define the maps
\begin{center}
	$c_{\lambda\mu}:\HH^0(E_\lambda \otimes V_\lambda) \rightarrow \HH^1(E_\mu \otimes V_\mu)$
\end{center}
to be the tensor product of the distinguished isomorphism $j_{\lambda\mu}$ from Lemma \ref{distinguished iso} with the composition of 
the maps $V_\lambda \rightarrow V_\mu$ in the quiver representation.
Putting together all these maps for all the possible pairings $(\lambda,\mu)$ we get a map $c_0:=\sum_{\lambda,\mu}c_{\lambda\mu}$,
$$c_0:\HH^0(\gr E) \rightarrow \HH^1(\gr E).$$
\end{defn}

\begin{thm}\label{teoremone}
Let $E$ a homogeneous vector bundle on $X$, and construct the map $c_0$ as in Definition \ref{def mappe c_i}. Then $\HH^0(E)=\Ker c_0$.
\end{thm}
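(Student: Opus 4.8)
The plan is to reduce the statement to the $A_m$-type case already treated in Theorem \ref{teorema per gli spaghetti}, by decomposing the contribution of each isotypical component of $\HH^0(\gr E)$ and analysing it separately. First I would fix a nonzero irreducible $G$-module $V=\Sigma^\lambda$ appearing in $\HH^0(\gr E)$, so that $V$ occurs as $\HH^0(E_\lambda)$ for one or several irreducible summands $E_\lambda$ of $\gr E$, and possibly as $\HH^1(E_\mu)$ for some summands $E_\mu$. Since both $\HH^0(E)$ and $\HH^1(\gr E)$ are $G$-modules and $c_0$ is $G$-equivariant by construction, it suffices to prove the equality $\HH^0(E)^V=(\Ker c_0)^V$ for each such $V$. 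On the isotypical component, $c_0$ restricts to the map obtained by tensoring the distinguished isomorphisms $j_{\lambda\mu}$ with the composed quiver maps $V_\lambda\to V_\mu$; writing $\sigma_0^V$ for the total linear map $\bigoplus_\lambda V_\lambda \to \bigoplus_\mu V_\mu$ assembled from these, the claim becomes $\HH^0(E)^V=V\otimes \Ker\sigma_0^V$.

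Next I would use the filtration \eqref{filtrazione} of $E$ and the long exact sequences in cohomology to compare $\HH^0(E)$ with $\HH^0(\gr E)$ and $\HH^1(\gr E)$. A section of $E$ restricts to a section of $\gr E$ lying in $\HH^0(\gr E)^V$, and the obstruction to lifting it back up the filtration is measured, stage by stage, by connecting maps into $\HH^1$ of the successive quotients. The heart of the argument is to identify each such connecting map, at the level of the isotypical component $V$, with the corresponding piece $c_{\lambda\mu}$ of $c_0$. This is exactly where Lemma \ref{distinguished iso} is used: the bundle $S$ constructed there (the $A_m$-type bundle running from $E_\lambda$ to $E_\mu$ with the same representation maps as $[E]$) realises the relevant connecting homomorphism, and by Theorem \ref{teorema per gli spaghetti} its $\HH^0$ along the component $V$ is precisely $V\otimes \Ker\sigma_0^V$ for the single pair. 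Splicing these together over all stages of the filtration and all pairs $(\lambda,\mu)$ gives $\HH^0(E)^V=V\otimes\Ker\sigma_0^V=(\Ker c_0)^V$.

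Concretely, the steps in order are: (i) reduce to a fixed isotypical component $V$ by equivariance; (ii) observe that only pairs $E_\lambda,E_\mu$ with $\mu=s_j(\lambda)$ in adjacent length-$0$ and length-$1$ Bott chambers contribute, by Bott's theorem \ref{teoremaBott} and Lemma \ref{Prop 6.4}, so that $c_0$ really is a direct sum of the elementary blocks $c_{\lambda\mu}$; (iii) for each block, invoke Lemma \ref{distinguished iso} to replace the filtration quotients of $E$ by the $A_m$-type bundle $S$, and Theorem \ref{teorema per gli spaghetti} to compute $\HH^0(S)^V$; (iv) use the functoriality of $E\mapsto[E]$ and exactness of $\gr$ to check that the connecting map in $E$ agrees with the one computed on $S$ — i.e. that passing to the $A_m$-type bundle does not change the map, only kills irrelevant summands; (v) reassemble to get $\HH^0(E)=\Ker c_0$.

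The main obstacle I anticipate is step (iv): verifying that the connecting homomorphism computed inside the genuine bundle $E$ coincides, on each isotypical piece, with the one computed in the auxiliary bundle $S$, even though $S$ is in general neither a subbundle nor a quotient of $E$. One has to argue that the only summands of $\gr E$ that can produce a given irreducible $G$-module in degree $0$ or $1$ are the $E_\lambda,E_\mu$ along the relevant $A_m$-path — here Proposition \ref{Prop 6.8} is the key input — so that all other summands contribute trivially to the relevant components of the long exact sequence, and the computation localises exactly to $S$. A secondary technical point is to rule out, as in the proof of Theorem \ref{teorema per gli spaghetti}, any spurious contribution from $\HH^2$ of the intermediate quotients; this again follows from the singularity statement in Proposition \ref{Prop 6.8} applied componentwise.
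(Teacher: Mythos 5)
Your overall strategy is the same as the paper's: reduce to a fixed isotypical component $V$, identify the relevant pairs $(\lambda,\mu)$ via Bott's theorem and Lemma \ref{Prop 6.4}, and transfer the computation to the auxiliary $A_m$-type bundle $S$ so that Theorem \ref{teorema per gli spaghetti} applies. The paper organizes this as an induction on the number of irreducible summands of $\gr E$ (peeling off a sink $E_\mu$ via $0\to E_\mu\to E\to Q\to 0$ and running the snake lemma against the four-term sequence of $c_0^E$ and $c_0^Q$), preceded by a reduction discarding the subbundle $E'$ generated by summands whose cohomology lives in degree $\ge 2$; your ``stage by stage up the filtration'' is a reasonable substitute for the induction, and the degree~$\ge 2$ reduction you omit is minor.

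The genuine gap is exactly at your step (iv), which you correctly flag as the main obstacle but do not actually carry out, and the tool you propose for it is the wrong one. Proposition \ref{Prop 6.8} is a statement about the specific auxiliary bundle $H=G\times^P(\EE^{\nu'}\otimes\Sigma)$ and is used inside the proof of Theorem \ref{teorema per gli spaghetti}; it does not say anything about which summands of $\gr E$ carry the module $V$, nor does it identify the connecting homomorphism of $E$ with that of $S$. Since $S$ is in general neither a subbundle nor a quotient of $E$, ``the computation localises exactly to $S$'' is precisely what must be proved, and it requires a construction: one takes the subbundle $K\subseteq E$ generated by the vertex $E_\lambda$ together with all arrows leaving it, compares the coboundaries of $0\to E_\mu\to K\to K/E_\mu\to 0$ and $0\to E_\mu\to E\to Q\to 0$ through the commuting square they sit in, and then passes from $K$ to its quotient $S$ by a second diagram $0\to\mathcal K\to K\to S\to 0$, obtaining the chain of identifications
$$\HH^0(Q)^W\xrightarrow{\partial}\HH^1(E_\mu)\ \cong\ \HH^0(K/E_\mu)^W\xrightarrow{\partial}\HH^1(E_\mu)\ \cong\ \HH^0(S/E_\mu)^W\xrightarrow{\partial}\HH^1(E_\mu).$$
Without this two-step interpolation through $K$, the identification of the connecting map of $E$ with the map $c_{\lambda\mu}=j_{\lambda\mu}\otimes(\hbox{composed quiver map})$ is unsupported --- and the examples in Section \ref{esempi} show that such identifications are genuinely delicate, since the ``naive'' composition of quiver maps can compute the wrong coboundary. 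As written, your argument asserts the key commutativity rather than proving it.
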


\begin{proof} The proof requires two steps:
\begin{itemize}
\item[1)] reduction to the case where $\HH^i(\gr E)\neq 0$ only for $i=0,1$.
\item[2)] induction on the number of irreducible summands of $\gr E$.
\end{itemize}

\underline{Step 1:} Call $E'$ the subbundle of $E$ generated---in the sense of the above Definition
\ref{def sottofibrato}---by all irreducible summands
$E_\lambda$ of $\gr E$ such that $\HH^i(E_\lambda) \neq 0$ for some
values of $i \geq 2$, hence $\HH^i(\gr E')=0$ for $i \leq 1$. We get a short exact sequence:
$$0 \rightarrow E' \rightarrow E \rightarrow E/E' \rightarrow 0.$$
The quotient $E/E'$ has the property that
$\HH^i(\gr (E/E'))=0$ for $i \geq 2$, that is any element of its graded
bundle has vanishing cohomology for $i \ge 2$, and we have
equalities:
$$\HH^0(E)=\HH^0(E/E')\:\:\:\:\:\hbox{and}\:\:\:\:\:\HH^1(\gr E)=\HH^1(\gr (E/E')).$$
Let $c_0^E:\HH^0(\gr E)\rightarrow \HH^1(\gr E)$ be the map defined in Definition \ref{def mappe c_i} for the bundle $E$. Let $c_0^{E/E'}$
the corresponding map for the quotient bundle.

Both the auxiliary $A_m$-type bundles and the distinguished isomorphisms appearing in Definition \ref{def mappe c_i} for the bundle $E$ coincide with those for $E/E'$: in other
words the maps $c_0^E$ and $c_0^{E/E'}$ are equal, and the same holds for their kernels, and this concludes Step 1.

From now on we will thus suppose, without any loss of generality, that:
$$\HH^i(E_\lambda)=0,\:\:\forall\: i \ge 2,\:\: \forall\: E_\lambda \in \gr E,$$
and we move on to Step 2.\\

\underline{Step 2:} The base step of the induction is the case where $E$ is irreducible: here we apply Theorem \ref{teoremaBott} 
and we are done.

Let us now confront the inductive step. Given our bundle $E$, consider the quiver $\Q|_E$ , that we can suppose connected. 
Choose an ordering of the vertices in a way that $ta > ha$ for every arrow
$a$. Call $E_\mu$ the irreducible bundle associated to one of the
minimal vertices of $\Q|_E$, and notice that it is a sink. Thus we
have the short exact sequence:
\begin{equation}\label{succ}
0 \rightarrow E_\mu \rightarrow E \rightarrow Q \rightarrow 0,
\end{equation}
where $Q$ is the quotient. Keeping the same notation as above, call
$c_0^E$ the linear map from Definition \ref{def mappe c_i} for the bundle $E$ and $c_0^Q$ the correspondent map for the vector bundle $Q$. 
For the bundle $Q$ the inductive hypothesis holds: $\HH^0(Q)=\Ker c_0^Q$.

We recall that $E_\mu$ has natural cohomology, and we look at the three possible situations that can occur.

If $E_\mu$ is singular, then the cohomology of $E$ and of $Q$ are trivially equal. But the singularity of $E_\mu$ also
implies that it won't appear at all in the construction of $c_0^E$.
This means that $c_0^E=c_0^Q$ and by inductive hypothesis we are done.

If $\HH^0(E_\mu)=\Sigma^\mu \neq 0$ then we have that:
$$\HH^0(E)=\HH^0(Q)\oplus \Sigma^\mu \:\:\:\:\:\hbox{and}\:\:\:\:\: \Ker c_0^E=\Ker c_0^Q\oplus \Sigma^\mu.$$
Since $\HH^0(Q)=\Ker c_0^Q$, it follows that $\HH^0(E)=\Ker c_0^E$, as we wanted.

The third situation occurs when $\HH^1(E_\mu)=W$ for some
nonzero $G$-module $W$. This time we limit ourselves to the $W$-isotypical component, for there is no loss in generality in doing it. We have equalities:
$$\HH^0(\gr E)^W=\HH^0(\gr Q)^W\:\:\:\:\:\hbox{and}\:\:\:\:\:\HH^1(\gr E)^W=\HH^1(\gr Q)^W \oplus W.$$
The map $c_0^Q$ is the composition of $c_0^E$ with the projection from $\HH^1(\gr E)^W$ on its direct summand
$\HH^1(\gr Q)^W$.

Now if the extension (\ref{succ}) splits, then we get $\HH^0(E)^W=\HH^0(Q)^W$ and $\Ker c_0^E=\Ker c_0^Q$, and we are done.

We then assume that the extension (\ref{succ}) does not split. 
The situation is best described in the following diagram:
$$\xymatrix@C-1ex{&           & & W \ar@{^{(}->}[d] \ar[dr]&         &   \\
 0 \ar[r] &(\Ker c_0^E)^W \ar[d]\ar[r]
 &\HH^0(\gr E)^W \ar@{=}[d] \ar[r]& \HH^1(\gr E)^W  \ar[r]\ar@{>>}[d]& (\coker c_0^E)^W \ar[d]\ar[r] & 0\\
 0  \ar[r]&(\Ker c_0^Q)^W \ar[r]
&\HH^0(\gr Q)^W \ar[r] & \HH^1(\gr Q)^W \ar[r] \ar[d]& (\coker c_0^Q)^W\ar[r]&0\\
&&&0&& }$$
that with a simple application of the snake lemma
yields to:
\begin{equation}\label{seconda riga}
0 \rightarrow (\Ker c_0^E)^W \rightarrow (\Ker c_0^Q)^W \rightarrow W
\rightarrow (\coker c_0^E)^W \rightarrow \ldots.
\end{equation}
On the other side we have the long exact cohomology sequence associated to (\ref{succ}):
\begin{equation}\label{prima riga}
0 \rightarrow \HH^0(E)^W \rightarrow \HH^0(G)^W \rightarrow
\HH^1(E_\mu) \rightarrow \ldots.
\end{equation}

Sequences (\ref{seconda riga}) and (\ref{prima riga}), together with the
inductive hypothesis all fit together in the following diagram:

\begin{equation}\label{diagramma c0}
\xymatrix{0\ar[r] & (\Ker c_0^E)^W \ar[r] \ar@{-->}[d]^{?}& (\Ker c_0^Q)^W \ar[d]^{\sim}\ar[r]& W \ar@{=}[d]\\
          0\ar[r] & \HH^0(E)^W \ar[r]  & \HH^0(Q)^W \ar[r]^{\partial}            & \HH^1(E_\mu)}
\end{equation}

When we prove that diagram \ref{diagramma c0} is commutative, by induction the thesis will follow. 
We can assume that there exists an 
irreducible summand $E_\lambda \in \gr Q$ such that $\HH^0(E_\lambda)=W$ (because otherwise there is nothing to prove). 
The commutativity of the diagram is entailed by the following:

\underline{Claim:} the coboundary map $\HH^0(Q)^W \xrightarrow{\partial} \HH^1(E_\mu)$ can be identified with:
$$ \HH^0(E_\lambda \otimes V_\lambda) \xrightarrow{\partial} \HH^1(E_\mu)=W,$$
where the latter is the coboundary map of the $A_m$-type bundle obtained by considering in the quiver $\Q|_E$ the path 
starting from $E_\lambda$ and ending in $E_\mu$, with the same quiver representation maps as for $E$. 
The claim reduces to the case of a coboundary map of a $A_m$-type bundle. Theorem \ref{teorema per gli spaghetti} 
then tells us that in this case if the extension doesn't split, the coboundary map is nonzero, and this concludes the proof.

\emph{Proof of claim:} Consider the vertex $E_\lambda$ together with all arrows starting from it, 
and take the subrepresentation of $[E]$ generated by these. Call $K$ the associated subbundle of $E$. We have:
$$0 \rightarrow K \rightarrow E \rightarrow E/K \rightarrow 0,$$
that fits, together with (\ref{succ}), in a diagram:
$$\xymatrix@R-1ex@C-1ex{&&0\ar[d]&0\ar[d]&\\
0\ar[r]&E_\mu\ar@{=}[d]\ar[r]&K\ar[d]\ar[r]&K/E_\mu\ar[r]\ar[d]&0\\
0\ar[r]&E_\mu\ar[r]&E\ar[d]\ar[r]&E/E_\mu=Q\ar[r]\ar[d]&0\\
&&E/K\ar[d]\ar@{=}[r]&E/K\ar[d]&\\
&&0&0&\\}$$

Taking long cohomology sequences associated with the diagram above, the two coboundary maps:
$$\xymatrix{\HH^0(K/E_\mu)^W \ar[d]\ar[r]^-{\partial}&\HH^1(E_\mu) \ar@{=}[d]\\
\HH^0(Q)^W \ar[r]^-{\partial}&\HH^1(E_\mu)}$$
can be identified, since the above square commutes.\\
Now we repeat the argument: let $S$ be the $A_m$-type bundle whose associated representation of the quiver 
is the quotient subrepresentation of $K$ obtained by taking only the $A_m$-type path that takes from $E_\lambda$ to $E_\mu$. 
Again, we have a short exact sequence:
$$0 \rightarrow \mathcal{K} \rightarrow K \rightarrow S \rightarrow 0,$$
and a diagram (note that $E_\mu \hookrightarrow S$):
$$\xymatrix@R-1ex@C-1ex{&&0\ar[d]&0\ar[d]&\\
&&E_\mu \ar[d]\ar@{=}[r]&E_\mu\ar[d]&\\
0\ar[r]&\mathcal{K}\ar@{=}[d]\ar[r]&K\ar[d]\ar[r]&S\ar[r]\ar[d]&0\\
0\ar[r]&\mathcal{K}\ar[r]&K/E_\mu\ar[d]\ar[r]&S/E_\mu\ar[r]\ar[d]&0\\
&&0&0&\\}$$
From taking long cohomology sequences associated with this second diagram, we get the identification of the two coboundary maps:
$$\xymatrix{\HH^0(K/E_\mu)^W \ar[d]\ar[r]^-{\partial}&\HH^1(E_\mu) \ar@{=}[d]\\
\HH^0(S/E_\mu)^W \ar[r]^-{\partial}&\HH^1(E_\mu)}$$
and the claim is proved, and so is the statement of Theorem \ref{teoremone}.
\end{proof}

\begin{rem}
One of the advantages of our construction is that one only has to deal with maps $\HH^0(\gr E) \rightarrow \HH^1(\gr E)$, whereas 
when using spectral sequences it is often necessary to compute $\HH^2(\gr E)$ and the related maps as well. 
\end{rem}

%%%%%%%%%%%%%%%%%%%%%%%%%%%%%%%%%%%%%%%%%%%%%%%%%%%%%%%%%%%%%%%%%%%%%%%%%%%%%%%%%%%%%%%%%%%%%%%%%%%%%%
%%%%%%%%%%%%%%%%%%%%%%%%%%%%%%%%%%%%%%%%%%%%%%%%%%%%%%%%%%%%%%%%%%%%%%%%%%%%%%%%%%%%%%%%%%%%%%%%%%%%%%
\section{Remarks and examples}\label{esempi}
%%%%%%%%%%%%%%%%%%%%%%%%%%%%%%%%%%%%%%%%%%%%%%%%%%%%%%%%%%%%%%%%%%%%%%%%%%%%%%%%%%%%%%%%%%%%%%%%%%%%%%
%%%%%%%%%%%%%%%%%%%%%%%%%%%%%%%%%%%%%%%%%%%%%%%%%%%%%%%%%%%%%%%%%%%%%%%%%%%%%%%%%%%%%%%%%%%%%%%%%%%%%%

A natural question arising is wether our construction can be simplified in any way. Suppose we have a homogeneous bundle $E$ with two irreducible summands $E_\lambda$ and $E_\mu$ of $\gr E$ such that $\HH^0(E_\lambda)=\HH^1(E_\mu)$. One wonders if it is possible to avoid the use of the distinguished isomorphism in Definition \ref{def mappe c_i} and simply compose the linear maps corresponding to the path from $E_\lambda$ to $E_\mu$ in the representation $[E]$, just like it is done in Theorem \ref{teorema per gli spaghetti} for $A_m$-type bundles.

If we could avoid the distinguished isomorphism, then we could also extend the result to higher cohomology. From Proposition \ref{Prop 6.4} we know that even in this more general situation $\mu-\lambda=k \xi_j$ for some $k \in \Z^+$ and for some weight $\xi_j$ of $\lien$, and that $\dim \Hom (E_\lambda \otimes E_{k\xi_j},E_\mu)^G=1$. Of course, if the weights $\lambda$ and $\mu$ belong to two ``strictly'' adjacent Bott chambers the path connecting them is in fact the support of some nonzero $A_m$-type bundle. If instead we deal with a derived arrow, every $A_m$-type bundle in that direction is completely reducible, yet the path connecting these two weights is well-defined, and we could still try to use the ``naive'' definition of just composing the linear maps in the representation. We would then get maps $c_i:\HH^i(\gr E) \rightarrow \HH^{i+1}(\gr E)$, just like in \cite{OR}. 

Unfortunately this is not the case. Let us look at some examples to see what happens.\\

We now work on the flag $\F=\SL_3/B$. We start by describing with some details the quiver and the relations for this variety.
The $\lier$-dominant weights are $\Lambda^+=\Z^3$, triplets $(\lambda_1,\lambda_2,\lambda_3)$ with the condition $\lambda_1+\lambda_2+\lambda_3=0$. Irreducible bundles on $\F$ are the line bundles:
\begin{equation}\label{iso per il caso 3-fold}
E_\lambda=E_{(\lambda_1,\lambda_2,\lambda_3)}=\OO_\F(\lambda_1-\lambda_2,\lambda_2-\lambda_3).
\end{equation}
The nilpotent algebra $\lien$ has three weights $\beta_1=(-1,1,0)$, $\beta_2=(0,-1,1)$ and $\beta_1+\beta_2=(-1,0,1)$. Call $e_1$, $e_2$ and $e_{12}$ the corresponding directions of the arrows.

In the Borel case, the relations of the quiver given in Section \ref{sezione equivalenza categorie} have an explicit form. 
Namely, for each root $\alpha$, let $e_\alpha \in \lieg_\alpha$ be the corresponding Chevalley generator, 
and define the Chevalley coefficients $N_{\alpha\beta}$ by setting $[e_\alpha,e_\beta]=N_{\alpha\beta}e_{\alpha+\beta}$, if
$\alpha+\beta \in \Phi^-$, and $N_{\alpha\beta}=0$ otherwise. 

\begin{prop}\cite[Proposition 1.21]{ACGP}\label{def relazioni bis}
The relations $\mathcal{R}$ on the quiver $\Q_{G/B}$ are the ideal
generated by all the equations:
$$\mathcal{R}_{(\alpha,\beta)}=\{f_{\lambda\mu}f_{\mu\nu}-f_{\lambda\mu'}f_{\mu'\nu}-N_{\alpha\beta}g_{\lambda\nu}=0\}$$
for $\alpha < \beta \in \Phi^-$ and for any pair of weights
$\lambda,\nu$, where $\alpha+\beta=\lambda-\nu$,
$\mu=\lambda+\alpha$ and $\mu'=\lambda+\beta$.
\end{prop}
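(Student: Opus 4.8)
The plan is to \emph{derive} Proposition~\ref{def relazioni bis} from the general relation-extraction procedure of Section~\ref{sezione equivalenza categorie}, that is from Theorem~\ref{theta} and the expansion \eqref{relazione da espandere}, specialized to the Borel case $P=B$, where every object involved becomes completely explicit. First I would record the simplifications coming from $\lier=\lieh$: each irreducible $\lier$-module $\EE^\lambda$ is one-dimensional (the character $\lambda$), each weight space of $\lien$ is one-dimensional and spanned by a Chevalley generator $e_\alpha$ with $\alpha\in\Phi^-$, and therefore each space $\Hom(\lien\otimes\EE^\lambda,\EE^\mu)^B$ is at most one-dimensional, with canonical generator $f_{\lambda\mu}$ characterized by $f_{\lambda\mu}\colon e_\alpha\otimes v^\lambda\mapsto v^\mu$ exactly when $\mu-\lambda$ is the weight $\alpha$ of $\lien$. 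Consequently the arrows of $\Q_{G/B}$ are: for each vertex $\lambda$ and each $\alpha\in\Phi^-$, one arrow $\lambda\to\lambda+\alpha$, the generating ones being those with $\alpha$ a negative simple root, the rest being derived.

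Next I would fix two vertices $\lambda,\nu$ and unwind the identity $\theta\wedge\theta=\theta\varphi$, i.e. \eqref{relazione da espandere}, inside $\Hom(\wedge^2\lien\otimes\EE^\lambda,\EE^\nu)^B$. Since $\{e_\gamma\wedge e_\delta:\gamma<\delta\in\Phi^-\}$ is a basis of $\wedge^2\lien$, this invariant $\Hom$-space has a basis indexed by the unordered pairs $(\alpha,\beta)$, $\alpha<\beta$, with $\alpha+\beta$ equal to the relevant weight, each basis vector being the map sending $(e_\alpha\wedge e_\beta)\otimes v^\lambda$ to $v^\nu$. Projecting \eqref{relazione da espandere} onto the $(\alpha,\beta)$-coordinate then selects exactly the terms in which the middle vertex of $\phi_{\lambda\mu\nu}(f_{\lambda\mu}\otimes f_{\mu\nu})$ uses the two roots $\alpha,\beta$ — namely the two composable paths $\lambda\to\mu\to\nu$ and $\lambda\to\mu'\to\nu$ through $\mu,\mu'$ as in the statement, contributing $f_{\lambda\mu}f_{\mu\nu}$ and $-f_{\lambda\mu'}f_{\mu'\nu}$, the sign being that of the transposition in $\wedge^2\lien$ — together with the contribution of $\varphi$, which via $[e_\alpha,e_\beta]=N_{\alpha\beta}e_{\alpha+\beta}$ and the definition of $\theta$ on the direct arrow produces the term $-N_{\alpha\beta}g_{\lambda\nu}$. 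Reading off this one coordinate yields, up to an overall nonzero scalar, the element $f_{\lambda\mu}f_{\mu\nu}-f_{\lambda\mu'}f_{\mu'\nu}-N_{\alpha\beta}g_{\lambda\nu}$ of the path algebra, which is exactly $\mathcal{R}_{(\alpha,\beta)}$.

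To finish I would note that this family is \emph{complete}: by construction \eqref{relazione da espandere} for a fixed pair $(\lambda,\nu)$ holds if and only if all of its coordinates vanish, i.e. if and only if $\mathcal{R}_{(\alpha,\beta)}=0$ for every admissible $(\alpha,\beta)$; letting $(\lambda,\nu)$ vary exhausts the relations of Theorem~\ref{BK}, so the $\mathcal{R}_{(\alpha,\beta)}$ generate the whole ideal. I would also remark, in the spirit of Remark~\ref{quante frecce}, that when $\alpha+\beta\in\Phi^-$ it is automatically a non-simple negative root, so $g_{\lambda\nu}$ is a derived arrow and, $N_{\alpha\beta}$ being then nonzero, the relation $\mathcal{R}_{(\alpha,\beta)}$ expresses it as a $\C$-combination of two length-two paths of generating arrows; when $\alpha+\beta\notin\Phi^-$ there is no direct arrow, $N_{\alpha\beta}=0$, and the relation simply equates the two paths.

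The main obstacle will be purely bookkeeping: keeping consistent track of the signs introduced by the duality $\EE^\lambda\cong(\text{highest weight }\lambda)^\vee$, by the antisymmetrization in $\wedge^2\lien$, and by the contraction maps $\phi_{\lambda\mu\nu}$ and $\phi_{\lambda\nu}$, and checking that the scalar relating $[f_{\lambda\mu},f_{\mu\nu}]$ to $f_{\lambda\nu}$ is indeed $N_{\alpha\beta}$ with the normalization of the statement. Since none of this alters the \emph{ideal} generated — rescaling each generator by a nonzero scalar is harmless — the conceptual point, that the only relations on $\Q_{G/B}$ are the ``Jacobi'' relations $\mathcal{R}_{(\alpha,\beta)}$, is a direct consequence of Theorem~\ref{theta}.
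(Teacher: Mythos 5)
Your derivation is correct and is essentially the route the paper itself intends: Proposition~\ref{def relazioni bis} is quoted from \cite{ACGP} without proof, and its content is exactly the specialization to $P=B$ of the relation-extraction sketched in Section~\ref{sezione equivalenza categorie}, i.e.\ expanding $\theta\wedge\theta=\theta\varphi$ from Theorem~\ref{theta} (equation \eqref{relazione da espandere}) in the basis $\{e_\alpha\wedge e_\beta\}$ of $\wedge^2\lien$, where in the Borel case all weight spaces are one-dimensional. Your completeness argument and the remark on sign/normalization conventions (which only rescale generators of the ideal) are the right things to say, so nothing essential is missing.
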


In our example we get the Serre relations:
\begin{align}
&\mathcal{R}_{(\beta_1,\beta_2)}=\{[e_1,e_2]=e_{12}\} \label{relf1}\\
&\mathcal{R}_{(\beta_1,\beta_{12})}=\{[e_1,e_{12}]=0\} \label{relf2}\\
&\mathcal{R}_{(\beta_2,\beta_{12})}=\{[e_2,e_{12}]=0\}\label{relf3}
\end{align}

The three directions of the arrows send a vertex $\OO_\F(a,b)$ to:
$$\xymatrix{\OO_\F(a-2,b+1)&\OO_\F(a-1,b-1)\\
\OO_\F(a,b) \ar[u]^{e_1} \ar@{-->}[ur]^{e_{12}} \ar@{~>}[r]^{e_2}& \OO_\F(a+1,b-2)}$$

Consider now the following subquiver of $\Q_\F$:
\begin{equation}\label{subquiver per esempi}
\xymatrix{\OO_\F(-4,2) \ar@{~>}[r]& \OO_\F(-3,0)\\
\OO_\F(-2,1) \ar[u]\ar@{~>}[r]\ar@{-->}[ur]& \OO_\F(-1,-1)\ar[u]\\
\OO_\F \ar[u] \ar@{-->}[ur]}
\end{equation}

The dashed arrow is the bracket of the other two, because $e_{13}=[e_{12},e_{23}]$. 
Also $\HH^0(\OO_\F)=\HH^1(\OO_\F(-2,1))=\HH^2(\OO_\F(-3,0))=\C$ and both $\OO_\F(-4,2)$ and $\OO_\F(-1,-1)$ are singular.

\begin{ex}\label{esempio A} Let $A$ be the rank 3 vector bundle on $\F$ constructed as follows. First take a non-split element $A_1$  (unique up to scale):
\begin{equation}\label{estensione A1}
	0 \rightarrow \OO_\F(-1,-1) \rightarrow A_1 \rightarrow \OO_\F(-2,1) \rightarrow 0,
\end{equation}
and then define $A$ to be the unique (again, up to scale) non-split extension:
\begin{equation}\label{estensione A}
	0 \rightarrow \OO_\F(-3,0) \rightarrow A \rightarrow A_1 \rightarrow 0.
\end{equation}
Since $\OO_\F(-1,-1)$ is singular, we get that $\HH^i(A_1)=\HH^i(\OO(-2,1))$ is nonzero only for $i=1$ and $\HH^1(A_1)=\HH^1(\OO(-2,1))=\C$.\\ 
Moreover $\HH^2(\OO(-3,0))=\HH^1(\OO(-2,1))=\C$, and hence if we look at the long cohomology sequence associated with (\ref{estensione A}) we get:
\begin{equation}\label{cohomology A}
	0 \rightarrow \HH^1(A) \rightarrow \HH^1(\OO_\F(-2,1)) \xrightarrow{\partial} \HH^2(\OO_\F(-3,0)) \rightarrow \HH^2(A) \rightarrow 0.
\end{equation}
We claim that the coboundary map $\partial$ is nonzero and thus $A$ is singular. Suppose not; then $\HH^1(A)=\HH^2(A)=\C$, and in particular there 
would be a non-split extension defining a vector bundle $L \in \Ext^1(\OO_\F,A)$:
\begin{equation}\label{estensione L}
	0 \rightarrow A \rightarrow L \rightarrow \OO_\F \rightarrow 0.
\end{equation}
Let's look at the representation $[L]$ of the quiver $\Q_\F$ associated with the bundle $L$.
%, as well as the other representations $[A_1]$ and $[A]$. 
For the sake of simplicity, we are going to look at it as a representation of the subquiver of $\Q_\F$ that we drew in (\ref{subquiver per esempi}). 
% From (\ref{estensione A1}) we get that:
% $$[A_1] = \xymatrix@R5ex@C7ex{0\ar@{~>}[r]& 0\\
% \C\ar[u]\ar@{~>}[r]_{a_1}\ar@{-->}[ur]&\C\ar[u]\\
% 0\ar[u] \ar@{-->}[ur]}$$
% where $a_1$ is the nonzero constant giving the element $A_1$ in the extension group $\Ext^1(\OO_\F(-2,1),\OO_\F(-1,-1))$ (since we have multiplicity 1 everywhere, we indicate any map $c\cdot \Id$ by just the constant $c$). Similarly from (\ref{estensione A}) we get:
% $$[A] = \xymatrix@R5ex@C7ex{0\ar@{~>}[r]& \C\\
% \C\ar[u]\ar@{~>}[r]_{a_1}\ar@{-->}[ur]_-{-a_2a_1}&\C\ar[u]_{a_2}\\
% 0\ar[u] \ar@{-->}[ur]}$$
% where now the constant $a_2 \neq 0$ gives us the element $A \in \Ext^1(A_1,\OO_\F(-3,0))$. Notice that the relations imply that the arrow $\dashrightarrow$ in the picture is $-a_2a_1$. Finally, if $\ell$ is the constant determining the bundle $L$ in the extension (\ref{estensione L}) then:
From (\ref{estensione A1}), (\ref{estensione A}) and (\ref{estensione L}) we get:
$$[L] = \xymatrix@R5ex@C7ex{0\ar@{~>}[r]& \C\\
\C\ar[u]\ar@{~>}[r]_{a_1}\ar@{-->}[ur]_-{-a_2a_1}&\C\ar[u]_{a_2}\\
\C\ar[u]^{\ell} \ar@{-->}[ur]_{\ell a_1}}$$
For the relations to be satisfied the constant $\ell \neq 0$ giving $L$ as non-split extension has to be such that $\ell a_1 a_2 =-\ell a_1 a_2$. 
If we assume that both $a_1$ and $a_2$ are non zero, i.e. that neither of the extensions 
(\ref{estensione A1}) and (\ref{estensione A}) split, this forces $\ell=0$, and our claim is proved.
\end{ex}

\begin{ex}\label{esempio B}
As a second example, we now study the vector bundle $B$ on $\F$ constructed as follows. 
Take the same subquiver of $\Q_\F$ drawn in (\ref{subquiver per esempi}). To fix ideas, now take the vector bundle $A$ to be the one constructed in Example \ref{esempio A} where this time we fix both the constants $a_1=a_2=1$. Extend this to an element of $\Ext^1(A,\OO_\F(-4,2))$. Notice that we have $\Ext^1(A,\OO_\F(-4,2))=\C^2$ so the extensions are parametrized by a pair: \begin{center}
$(s,t) \in \Ext^1(\OO_\F(-2,1),\OO_\F(-4,2)) \times \Ext^1(\OO_\F(-3,0),\OO_\F(-4,2))$.
\end{center} 
Call the vector bundle in the extension $B_{s,t}$. 
% We have:
% \begin{equation}\label{estensione Bst}
% 	0 \rightarrow \OO_\F(-4,2) \rightarrow B_{s,t} \rightarrow A \rightarrow 0,
% \end{equation}
% and the associated quiver representation is:
% $$[B_{s,t}] = \xymatrix@R5ex@C7ex{\C\ar@{~>}[r]^t& \C\\
% \C\ar[u]^s\ar@{~>}[r]_1\ar@{-->}[ur]_-{st-1}&\C\ar[u]_1\\
% 0\ar[u] \ar@{-->}[ur]}$$
We now let $t=1$ and extend the vector bundle $B_{s,1}$ with the irreducible bundle $\OO_\F$. We call $F$ the element in the extension $\Ext^1(\OO_\F,B_{s,1})=\C$ given by the constant $f \neq 0$: 
% We have a short exact sequence:
\begin{equation}\label{estensione F}
 	0 \rightarrow B_{s,1} \rightarrow F \rightarrow \OO_\F \rightarrow 0.
 \end{equation}
% and the quiver representation looks like:
% $$[F_f] = \xymatrix@R5ex@C7ex{\C\ar@{~>}[r]^1& \C\\
% \C\ar[u]^s\ar@{~>}[r]_1\ar@{-->}[ur]_-{s-1}&\C\ar[u]_1\\
% \C\ar[u]^f \ar@{-->}[ur]_f}$$
It is easy to check that the only value of $s$ such that the bundle $B_{s,1}$ does extend to a bundle $F$, no matter for what value of $f$, is $s=2$. 
%To see this, just notice that the relations of the quiver impose that $f(s-1)=f$, so for any value of $f \neq 0$ this forces $s=2$. 
We fix $f=1$, 
% So now the vector bundle $F$ has associated representation:
% $$[F] = \xymatrix@R5ex@C7ex{\C\ar@{~>}[r]^1& \C\\
% \C\ar[u]^2\ar@{~>}[r]_1\ar@{-->}[ur]_-1&\C\ar[u]_1\\
% \C\ar[u]^1 \ar@{-->}[ur]_1}$$
and we compute the cohomology of $F$. Using similar techniques to the previous Example \ref{esempio A}, one gets that $\HH^0(F)=\HH^1(F)=0$ and $\HH^2(F)=\C$.

This allow us to compute the cohomology of $B_{2,1}$. From sequence (\ref{estensione F}) we get that $\HH^1(B_{2,1})=\HH^2(B_{2,1})=\C$. 
\end{ex}
What do Examples \ref{esempio A} and \ref{esempio B} tell us? Let's take a second look at the (representations of the quiver associated) to the two bundle $A$ and $B=B_{2,1}$. 
$$[A] = \xymatrix@R5ex@C7ex{0\ar@{~>}[r]& \C\\
\C\ar[u]\ar@{~>}[r]_1\ar@{-->}[ur]_{-1}&\C\ar[u]_1\\
0\ar[u] \ar@{-->}[ur]}\:\:\:\:\:\:\:\:\:\:\:\:\:\:\:\:
[B] = \xymatrix@R5ex@C7ex{\C\ar@{~>}[r]^1& \C\\
\C\ar[u]^2\ar@{~>}[r]_1\ar@{-->}[ur]_1&\C\ar[u]_1\\
0\ar[u] \ar@{-->}[ur]}$$

The picture above shows that the naive definition that we wanted to try above is indeed impossible. In the two situations $A$ and $B$ have the same 
graded bundle up to a singular factor, and the map $c_1$ would be the same (up to sign). Yet in the first situation the coboundary map is 
nonzero and the bundle $A$ is singular, while in the second situation the coboundary map $\partial=0$ and $B$ has nonzero cohomology.\\

In \cite{OR} the authors remark that the fact that in the Hermitian symmetric case $(\HH^*(\gr E),c_*)$ is a complex should be in principle a consequence
of the relations of the quiver. Our conjecture, motivated by examples similar to the ones we just described, is that the result of \cite{OR} 
does not generalize to the non-Hermitian symmetric case.

\subsection*{Acknowledgements.} \emph{This paper is part of my PhD thesis and it would not have been possible without my advisor Giorgio Ottaviani. 
I am deeply indebted to him for all he has taught me, and for the time and energies he has spent for me. 
I would also like to thank my mentor JM Landsberg for his help, and Jerzy Weyman, Jarek Buczynski and Andreas Cap for several interesting discussions.}

\bibliographystyle{amsalpha}
\bibliography{sections}

\end{document}